\newcommand\As{{\mathbb A}}
\newcommand\Vs{{\mathbb V}}
\newcommand\Ec{{\mathcal E}}
\newcommand\F{{\mathcal F}}
\newcommand\Hc{{\mathcal H}}
\newcommand\Ic{{\mathcal I}}
\newcommand\Jc{{\mathcal J}}
\newcommand\Lca{{\mathcal L}}
\newcommand\Oc{{\mathcal O}}
\newcommand\Pc{{\mathcal P}}
\newcommand\Sc{{\mathcal S}}
\newcommand\U{{\mathcal U}}
\newcommand\Spec{\operatorname{Spec}}
\newcommand\Spf{\operatorname{Spf}}
\newcommand\End{\operatorname{End}}
\newcommand\Endc{\operatorname{\mathcal End}}
\newcommand\Homc{\operatorname{\mathcal Hom}}
\newcommand\Det{\operatorname{Det}}
\newcommand\di{\operatorname{dim}}
\newcommand\Tg{\operatorname{T}}
\newcommand\Hcoh{\operatorname{H}}
\newcommand\Tr{\operatorname{Tr}}
\newcommand\expo{\operatorname{exp}}
\newcommand\ch{\operatorname{ch}}
\newcommand\Res{\operatorname{Res}}
\newcommand\iso{\kern.35em{\raise3pt\hbox
{$\sim$}\kern-1.1em\to}\kern.3em}
\newcommand\w{{\omega_X }}
\newcommand\Gr{\operatorname{Gr}}
\newcommand\Grc{\operatorname{\mathcal{G}r}}
\newcommand\Ide{\operatorname{Id}}
\newcommand\J{\operatorname{J}}
\newcommand\Higgs{{\Hc}iggs}
\theoremstyle{plain}
\newtheorem{thm}{Theorem}[section]
\newtheorem{lemma}[thm]{Lemma}
\newtheorem{prop}[thm]{Proposition}
\theoremstyle{definition}
\newtheorem{defin}[thm]{Definition}
\theoremstyle{remark}
\newtheorem{remark}[thm]{Remark}
\numberwithin{equation}{section}
\begin{document}

\title[Higgs pairs and Infinite Grassmannian]{Equations of the moduli of Higgs pairs and Infinite Grassmannian.}

\author[D. Hern\'andez-Serrano, J. M. Mu\~noz  and F. J. Plaza]{D. Hern\'andez-Serrano \\ J. M. Mu\~noz Porras \\  F. J. Plaza
Mart\'{\i}n}

\address{Departamento de Matem\'aticas, Universidad de
Salamanca,  Plaza
        de la Merced 1-4
        \\
        37008 Salamanca. Spain.
        \\
         Tel: +34 923294460. Fax: +34 923294583}

\thanks{
        {\it 2000 Mathematics Subject Classification}: 14H60, 37K10 (Primary)
     14H10,  14H70, 58B99 (Secondary). \\
\indent {\it Key words}: Higgs Pairs, spectral curves, infinite Grassmannians.   \\
\indent This work is partially supported by the research contracts
MTM2006-0768 of DGI and  SA112A07 of JCyL. The first
author is also supported by MTM2006-04779. \\
\indent {\it E-mail addresses}: dani@usal.es, jmp@usal.es,
fplaza@usal.es}
\email{dani@usal.es}
\email{jmp@usal.es}
\email{fplaza@usal.es}

\begin{abstract}
In this paper the moduli space of Higgs pairs over a fixed smooth projective curve with extra formal data is defined and it is endowed with a scheme structure. We introduce a relative version of the Krichever map using a fibration of Sato Grassmannians and show that this map is injective. This fact and the characterization of the points of the image of the Krichever map allow us to prove that this moduli space is a closed subscheme of the product of the moduli of vector bundles (with formal extra data) and a formal anologue of the Hitchin base. This characterization also provide us the method to compute explicitely KP-type equations which describe the moduli space of Higgs pairs. Finally, for the case where the spectral cover is totally ramified at a fixed point of the curve, these equations are given in terms of the characteristic coefficients of the Higgs field.
\end{abstract}

\maketitle

\section{Introduction.}\qquad

In $1988$, Professor Hitchin studied (\cite{Hi}) the symplectic geometry of the cotangent space $\Tg^{\ast}\U_X$ to the moduli space $\U_X$ of vector bundles over a compact Riemann surface $X$, its points led him to introduce a new and nowadays relevant concept: the notion of Higgs pairs. From the point of view of the Sympletic Geometry, the map from $\Tg^{\ast}\U_X$ to an affine space of global sections (now called Hitchin map) turned out to be an algebraically completely integrable Hamiltonian system, fact that translates in Algebraic Geometry by saying that the fibers of the Hitchin map are Jacobians of a special curve covering $X$ (the spectral curve). As the author says in \cite{Hi}, one question is left: how to realize the Hamiltonian differential equations in some concrete way? 

Bearing in mind this question, Li and Mulase have partially solved the problem in \cite{LM1} using techniques of infinite integrable systems. They used the infinite Sato Grassmannian as a space of solutions for the KP system (\cite{Sa}) and have answered the question of how to recover the Hitchin system from the KP system in the case for which the spectral cover is not ramified. At the same period and with similar tools (Krichever morphism, Sato Grassmannian, ...), Donagi and Markman have also studied this topic in \cite{DM} with full generallity, they have shown certain compatibility for KP flows concerning spectral data coming from Higgs pairs on $X$. As far as we know, concrete equations describing this moduli space (and allowing ramification in the spectral cover) remains to be given. In this paper we try to solve this question. 

The paper is organized as follows. In section $2$ few words about the classical correspondence between Higgs pairs and line bundles over the spectral curve are said. Section $3$ is devoted to the study of the Krichever map for the moduli space of Higgs pairs with extra formal data, $\Higgs_X^{\infty}$. For this goal, an alternative method to that of \cite{LM1} and \cite{DM} is proposed: to use a fibration of infinite Sato Grassmannians (over a formal analogue $\As$ of the Hitchin base) and generalize the Krichever morphism. Theorem \ref{higgs:t:repHiggs} proves that $\Higgs_X^{\infty}$ is representable. Theorem \ref{higgs:t:KrIny} shows that the Krichever map is inyective. This fact, combined with the characterization of the image of the Krichever map (theorem \ref{higgs:t:carHiggs}) allow us to give the first main result: $\Higgs_X^{\infty}$ is a closed subscheme of $\U_X^{\infty}\times \As$ (theorem \ref{higgs:t:rep_en_U_X}, where $\U_X^{\infty}$ is the moduli scheme of vector bundles over $X$ with formal extra data). Section $4$ gives the spectral construction for the above topics defining a relative Grassmannian. Finally, in section $5$ the sencond main result is given, theorem \ref{eq:t:eqHiggs_X} provides concrete KP-type equations describing $\Higgs_X^{\infty}$ in terms of bilinear identities of Baker-Akhiezer functions and, for the connected component of $\Higgs_X^{\infty}$ on which the spectral cover is totally ramified, equations are explicitely computed in terms of the characteristic coefficients of the Higgs field (theorem \ref{eq:t:eq_tot_rami}).

\section{Preliminaries.}\label{higgs:s:pre}\quad

Denote $\U_X$ the moduli stack of rank $n$ vector bundles over a projective, smooth curve $X$ of genus $g$. Recall that a Higgs pair on $X$ consists of a vector bundle $E\in \U_X$ and a morphism of sheaves of $\Oc_X$-modules:
$$\varphi \colon E\to E\otimes \w\,.$$

%El fibrado $E$ est\'a dado (en t\'erminos de un recubrimiento abierto de $X$) por un 1-cociclo con valores en el haz %de grupos $\Gl(\Oc_X,n)$. Diferenciando este cociclo obtenemos un 1-cociclo con valores en el correspondiente haz de %\'algebras de Lie, luego se tiene
It is known (\cite{DM}):
$$\Tg^{\ast}_E \U_X \iso \Hcoh^0(X,\Endc_X E \otimes \w)\,.$$
and therefore, the cotangent space to $\U_X$ parametrizes Higgs pairs on $X$.

A remarkable fact is the existence of the Hitchin map:
\begin{align*}
H \colon \Tg^{\ast} \U &\to \oplus_{i=1}^n\Hcoh^0(X,\omega_X^i)\\
(E,\varphi) & \mapsto \ch(\varphi)
\end{align*}
where 
$$\ch(\varphi)=\big (\Tr(\varphi),\Tr(\Lambda^2 \varphi),\dots ,\Tr(\Lambda^n \varphi) \big )$$
are the characteristic coefficients of $\varphi$.

Another relevant issue is the spectral construction, whose importance lies on the fact that, for a generic  $s=(s_1,\dots ,s_n)\in \oplus_{i=1}^n\Hcoh^0(X,\omega_X^i)$, the points of $H^{-1}(s)$ corresponds to line bundles on the so called spectral curve $X_s$. We briefely describe its definition (\cite{Hi},\cite{BNR},\cite{Si}):
Consider the cotangent bundle:
$$\Tg^{\ast} X:=\Spec \Sc^{\bullet}\omega_X^{-1}$$
where $\Sc^{\bullet}\omega_X^{-1}$ is the sheaf of symmetric algebras  on $X$. Let $\Ic_s$ be the sheaf of ideals of $\Sc^{\bullet}\omega_X^{-1}$ generated by the image of:
\begin{align*}
\omega_X^{-n} & \to \Sc^{\bullet}\omega_X^{-1}=\oplus_{i\geq 0}\omega_X^{-1} \\
\xi & \mapsto (\xi \otimes s_n,\xi \otimes s_{n-1},\dots ,\xi \otimes s_1,\xi, 0, 0, \dots )
\end{align*}
The quotient $\Sc^{\bullet}\omega_X^{-1}/\Ic_s$ is a sheaf of $\Oc_X$-algebras and the spectral curve is defined as the zeros of $\Ic_s$ in $\Tg^{\ast}X$, that is:
$$X_s:=\Spec \Sc^{\bullet}\omega_X^{-1}/\Ic_s$$
The map:
$$\pi \colon X_s \to X$$
is a finite covering of degree $n$, where $n$ is the rank of $E$ and for generic $s$, $X_s$ is non singular. 

The bijection between $\J(X_s)$ and $H^{-1}(s)$
(where we denote the Jacobian of $X_s$ by $\J(X_s)$) is the following:

We can think of the Higgs field $\varphi$ as a morphism:
$$\omega_X^{-1}\to E \otimes E^{\ast}$$
equivalentely, as a morphism of $\Oc_X$-algebras
$$\Sc^{\bullet}\omega_X^{-1} \to E\otimes E^{\ast}\,.$$
Hamilton-Cayley theorem implies that this morphism factorizes by $\Oc_{X_s}=\Sc^{\bullet}\omega_X^{-1}/\Ic_s$ if and only if $\ch(\varphi)=s$. So given a point in $H^{-1}(s)$, that is, given $(E,\varphi)$ such that $\ch(\varphi)=s$, $E$ can be thought as a line bundle $L$ on $X_s$ (assuming smoothness) and $\pi_{\ast}L\iso E$. The converse is analogous.

\section{Moduli of Higgs pairs with extra formal data.}\quad

 We will work over the field of complex numbers and will denote it by $k$. We will assume as fixed, once and for all, the following data:
\begin{itemize}
\item $X$ smooth, projective, integral curve over $k$.
\item $x\in X$.
\item $t\colon \widehat \Oc_{X,x}\iso k[[z]]$ a formal trivialization of $X$ at $x$.
\end{itemize}
We want to consider a moduli problem parametrising:
 \begin{itemize}
 \item A rank $n$ vector bundle on $X$, $E$.
 \item A Higgs field $\varphi \colon E \to E \otimes \omega_X$.
 \item A formal trivialization $\phi$ of $E$ on $x$, i.e. $\phi \colon \widehat E_x \iso \widehat \Oc_{X,x}^{\oplus n}$
 \end{itemize}
We will see that certain compatibility between $\phi$ and $\varphi$ have to be imposed.
 \begin{defin}
 We call \emph{formal Higgs field} to the  morphism of sheaves of $\widehat\Oc_{X,x}$-modules:
 $$\widehat \varphi \colon \widehat E_x\to \widehat E_x\otimes \widehat \omega_{X,x}$$
 induced by the completion along $x$. The coefficients $a_i$ of the characteristic polynomial $p_{\widehat \varphi}(T)$ of $\widehat \varphi$  lie on $\Hcoh^0(\widehat X_x,\widehat \omega_{X,x}^{\otimes i} )$.
 \end{defin}

Since $X$ is smooh, the dualising sheaf $\w$ is a line bundle and formal trivialization $t$ induces a formal trivialization of $\w$ at $x$:
\begin{equation}\label{higgs:e:dt}
 dt \colon \widehat \omega_{X,x} \iso k[[z]]dz
\end{equation}
(we will forget about the generator $dz$). Then, the coefficients of the polynomial $p_{\widehat \varphi}(T)$ lies on  $k[[z]]$. Let us now introduce a $k$-scheme  parametrising these coefficients.

Let $\As^{\infty}$ be the infinite dimensional affine group scheme over $k$ with the addition group law, that is (\cite[Def. 4.13]{AMP}):
$$\As^{\infty}:=\Spec \varinjlim_l k[y_0, \dots ,y_l]=\Spec k[y_0,y_1,\dots ]$$
Let $R$ be a $k$-algebra. The correspondence:
$$s_0 +s_1 z+ \cdots \in R[[z]] \leftrightarrow (s_0,s_1,\dots )\in \As^{\infty}(R)$$
gives the following:
\begin{prop}
The functor, $\widetilde{k[[z]]}$, that associates to each $k$-algebra $R$ the ring of formal power series $R[[z]]$, is representable by $\As^{\infty}$.
\end{prop}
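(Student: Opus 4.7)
The plan is a direct verification by unwinding definitions; no substantive obstacle is anticipated, since the statement is essentially a book-keeping exercise. First, I would compute the $R$-points of $\As^{\infty}$: since $\As^{\infty} = \Spec \varinjlim_{l} k[y_0,\ldots,y_l]$ and since $\Hom_{k\text{-alg}}(-,R)$ carries direct limits in the first argument to inverse limits,
\begin{align*}
\As^{\infty}(R) & = \Hom_{k\text{-alg}}\bigl(\varinjlim_{l} k[y_0,\ldots,y_l],\, R\bigr) \\
& = \varprojlim_{l}\Hom_{k\text{-alg}}(k[y_0,\ldots,y_l],R) \\
& = \varprojlim_{l} R^{l+1},
\end{align*}
where the last identification uses the universal property of polynomial algebras: a $k$-algebra map $k[y_0,\ldots,y_l]\to R$ is determined by the images $r_i\in R$ of the $y_i$, and the transition maps in the inverse system are the projections forgetting the last coordinate.

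Next, I would identify $\varprojlim_l R^{l+1}$ with $R[[z]]$ as a set. A compatible family of tuples $(r_0,\ldots,r_l)_{l\geq 0}$ is the same datum as a single sequence $(r_i)_{i\geq 0}\in R^{\N}$, which one encodes as the formal power series $\sum_{i\geq 0} r_i z^i\in R[[z]]$. This assignment is tautologically a bijection and is precisely the correspondence written in the statement of the proposition.

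Finally, I would verify naturality in $R$: a $k$-algebra morphism $f\colon R\to R'$ sends the sequence $(r_i)$ to $(f(r_i))$, which under the identification above corresponds to the coefficient-wise map $R[[z]]\to R'[[z]]$ induced by $f$. Since this matches the obvious functoriality of $\widetilde{k[[z]]}$, the two functors are naturally isomorphic and representability follows.
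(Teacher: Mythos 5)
Your proof is correct and follows exactly the route the paper intends: the paper offers no argument beyond displaying the correspondence $s_0+s_1z+\cdots \leftrightarrow (s_0,s_1,\dots)$, and your computation $\As^{\infty}(R)=\varprojlim_l \Hom_{k\text{-alg}}(k[y_0,\dots,y_l],R)=\varprojlim_l R^{l+1}\cong R[[z]]$ together with the naturality check is precisely the book-keeping that justifies it.
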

%\begin{corol}
%El functor
%$$\widetilde{k[[z]]^n} \colon R \rightsquigarrow R[[z]]^n$$
%es representable por el $k$-esquema $\As$ definido por
%$$\As=(\As^{\infty})^n=\Spec k[y_0^{(1)},\dots ,y_0^{(2)},\dots ,y_0^{(n)},\dots ]$$
%\end{corol}

\begin{prop}\label{higgs:p:As}
The functor on the category of $k$-schemes defined by:
$$S \rightsquigarrow \oplus_{i=1}^n \Hcoh^0(\widehat X_x\times S,\widehat{\omega}_{x\times S}^{\otimes i})$$
is representable by the $k$-scheme $\As=(\As^{\infty})^n$.
\end{prop}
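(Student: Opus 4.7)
The plan is to reduce the statement to the preceding proposition by using the formal trivialization to identify each $\widehat\omega_{X,x}^{\otimes i}$ with $k[[z]]$ as a free $k[[z]]$-module of rank one. From the isomorphism $dt\colon \widehat\omega_{X,x}\iso k[[z]]dz$ of \eqref{higgs:e:dt} one obtains by tensoring
$$\widehat\omega_{X,x}^{\otimes i}\iso k[[z]](dz)^{\otimes i},$$
and the generator $(dz)^{\otimes i}$ trivializes the right hand side as a free $k[[z]]$-module of rank one.

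Next, for $S=\Spec R$ the relative sheaf $\widehat\omega_{x\times S}^{\otimes i}$ is obtained by base change from this trivial module, and since via $t$ the formal disc $\widehat X_x$ is literally $\Spf k[[z]]$, the computation of global sections reduces to $k[[z]]\,\widehat\otimes_k R\iso R[[z]]$, so
$$\Hcoh^0\bigl(\widehat X_x\times S,\widehat\omega_{x\times S}^{\otimes i}\bigr)\iso R[[z]]$$
naturally in $R$. Summing over $i=1,\dots,n$ and invoking the previous proposition (which represents $R\mapsto R[[z]]$ by $\As^{\infty}$), one obtains a natural bijection between the source functor on affine $k$-schemes and $\Hom(S,(\As^{\infty})^n)$. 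Since $(\As^{\infty})^n$ is affine and the source functor is a Zariski sheaf (both sides glue along open covers of $S$), representability extends uniquely from the affine site to arbitrary $k$-schemes.

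The only delicate point I foresee is the second step: verifying that the formation of $\Hcoh^0$ of the relative formal completion really commutes with the base change $k\rightsquigarrow R$. In this setting no general flatness or cohomology-and-base-change machinery is required, because the trivialization $dt$ reduces everything to the explicit identity $k[[z]]\,\widehat\otimes_k R = R[[z]]$; once this is spelled out, the remainder of the argument is a direct application of Yoneda and the preceding proposition.
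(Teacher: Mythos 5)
Your argument is correct and is exactly the one the paper intends: the paper in fact gives no explicit proof of this proposition, relying implicitly on the trivialization $dt$ of \eqref{higgs:e:dt} and the immediately preceding proposition representing $R\mapsto R[[z]]$ by $\As^{\infty}$, which is precisely what you spell out (including the reduction to affines and the gluing step). Nothing further is needed.
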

%\begin{remark}
%We will denote $\As^{\infty}$ by:
%$$\As=\oplus_{i=1}^n \As_i$$
%\end{remark}

\begin{remark}\label{higgs:r:As}
We will think of the points of $\As$ with values in $S$ as polynomials
$$T^n+a_1T^{n-1}+ \cdots +a_n$$
where $a_i \in \Hcoh^0(\widehat X_x \times S,\widehat \omega_{x\times S}^{\otimes i})$.
\end{remark}

\begin{defin}\label{higgs:d:higgs}
 We define the functor $\Higgs_X^{\infty}$ as the sheafication of:
\begin{align*}
\mathcal C_{k-sch} & \rightsquigarrow \mathcal C_{sets}\\
 S &\mapsto \{(E,\phi,\varphi)\}/\sim
\end{align*}
where
\begin{enumerate}
\item $E$ is a rank $n$ vector bundle over $X\times S$.
\item $\varphi \colon E \to E \otimes \omega_{X\times S/S}$ is a relative Higgs field, that is, a morphism of sheaves of $\Oc_{X\times S}$-modules.
\item $\phi$ is a formal trivialization of $E$ along $x\times S$ 
$$\phi \colon \widehat E_{x\times S}\iso \widehat \Oc_{x\times S}^{\oplus n}$$
such that there exists a matrix 
$$M=\left( \begin{array}{cccccc}
0 & & & & 0 & (-1)^{n+1}a_n\\
1 & \cdot & & & \cdot & (-1)^na_{n-1} \\
\cdot & 1 &\cdot  & & \cdot & \cdot \\
\cdot & & \cdot & \cdot & \cdot & \cdot \\
\cdot & & & \cdot & 0 & -a_2 \\
0 & \cdot & & 0 & 1 & a_1
\end{array} \right)$$
(where $(a_1,\dots ,a_n)\in \As(S)$) making the following diagram commutative:
$$\xymatrix{
\widehat E_{x\times S} \ar[r]^{\widehat \varphi \qquad} \ar[d]^{\wr}_{\phi} & \widehat E_{x\times S}\otimes \widehat \omega_{x\times S} \ar[d]_{\wr}^{\phi \otimes dt}\\
\Oc_S[[z]]^n \ar[r]^M & \Oc_S[[z]]^n
}$$
\item Two triples $(E,\varphi,\phi)$ and $(E',\varphi',\phi')$ are said to be equivalent whenever there exists an isomorphism of vector bundles $f\colon E\iso E'$ compatible with all data, that is:
\begin{itemize}
 \item $f$ is compatible with $\phi$ and $\phi'$, so the following diagram is commutative:
$$\xymatrix{
\widehat E_{x\times S}\ar[rr]^{\sim} \ar[dr]_{\phi} & & \widehat E'_{x\times S} \ar[ld]^{\phi'}\\
 & \Oc_S[[z]]^n &
}$$
 \item $f$ is compatible with $\varphi$ and $\varphi'$, that is, the diagram:
$$\xymatrix{
E \ar[r]^{\varphi} \ar[d]^{\wr}_f & E\otimes \omega \ar[d]_{\wr}^{f\otimes \Ide}\\
E' \ar[r]^{\varphi'} & E'\otimes \omega 
}$$
is commutative.
\end{itemize}

\end{enumerate}
\end{defin}

\begin{remark}
Given $(E,\phi,\varphi)\in \Higgs_X^{\infty}(S)$, the elements $(a_1,\dots ,a_n)$ of the third condition are forcely the  characteristic coefficients of $\widehat \varphi$.
\end{remark}

\begin{remark}\label{higgs:r:diagrconm}
 Whenever two $S$-valued points $(E,\varphi,\phi)$ and $(E',\varphi',\phi')$ are equivalent, the characteristic coefficients of both $\widehat \varphi$ and $\widehat \varphi '$ are the same. This implies the commutativity of the following diagram:
$$\xymatrix{
E \ar[rrr]^{\varphi} \ar[dd]^{\wr} \ar[rd] & & & E\otimes \omega \ar[dl] \ar[dd]^{\wr} \\
& \Oc_S[[z]]^n \ar[r]^M & \Oc_S[[z]]^n & \\
E' \ar[ur] \ar[rrr]^{\varphi'} & & & E'\otimes \omega \ar[ul]
}$$
\end{remark}

%\begin{remark}\label{higgs:r:diagrconm}
% Cuando dos ternas $(E,\varphi,\phi)$, $(E',\varphi',\phi')$ son equivalentes, los coeficientes de los polinomios %caracter\'{\i}sticos de los campos de Higgs formales son los mismos y se tiene un diagrama conmutativo:
%$$\xymatrix{
%E \ar[rrr]^{\varphi} \ar[dd]^{\wr} \ar[rd] & & & E\otimes \omega \ar[dl] \ar[dd]^{\wr} \\
%& k[[z]]^n \ar[r]^M & k[[z]]^n & \\
%E' \ar[ur] \ar[rrr]^{\varphi'} & & & E'\otimes \omega \ar[ul]
%}$$
%\end{remark}

\begin{defin}\label{higgs:d:Hmap}
The \emph{formal Hitchin map} is the morphism:
$$\Hc_{\infty} \colon \Higgs_X^{\infty} \to \As$$
which associates to each triple $(E,\phi,\varphi)\in \Higgs_X^{\infty}(S)$ the characteristic coefficients
$$(a_1,\dots , a_n)\in \As(S)\qquad \qquad a_i=\Tr (\Lambda^i \widehat \varphi)$$
of the formal Higgs field $\widehat \varphi$.
\end{defin}
From now on, we will restrict to the open dense subset of $\As$ consisting of those polynomials $p(T)\in \As(S)$ such that $\Oc_S[[z]][T]/p(T)$ is a separable $\Oc_S[[z]]$-algebra.

\begin{thm}\label{higgs:t:repHiggs}
 The functor $\Higgs_X^{\infty}$ is representable by a $k$-scheme.
\end{thm}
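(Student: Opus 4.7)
The plan is to realise $\Higgs_X^{\infty}$ as a closed subscheme of a product of already-representable functors. The three ingredients are the moduli $\U_X^{\infty}$ of rank $n$ vector bundles with formal trivialization at $x$, representable by \cite{AMP}; the affine scheme $\As$ of Proposition \ref{higgs:p:As}; and the scheme of formal $n\times n$ matrices $M_n(\As^{\infty})\simeq (\As^{\infty})^{n^2}$, which is representable since $\As^{\infty}$ represents the formal power series functor.

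The first step is to represent the auxiliary functor $\widetilde\Higgs_X^{\infty}$ sending $S$ to equivalence classes of triples $(E,\phi,\varphi)$ satisfying conditions (1), (2) and (4) of Definition \ref{higgs:d:higgs} but dropping the companion-matrix compatibility in (3). Over the universal pair $(\mathcal{E},\Phi)$ on $X\times \U_X^{\infty}$ the sheaf $\Endc(\mathcal{E})\otimes \omega_{X\times\U_X^{\infty}/\U_X^{\infty}}$ is coherent and flat over the base, so the relative global sections functor
$$
S\;\longmapsto\; \Hcoh^0\bigl(X\times S,\,\Endc(E)\otimes \omega\bigr)
$$
is representable by a linear scheme $\Vs\to \U_X^{\infty}$, built from a two-term resolution of the relative pushforward by locally free sheaves in the style of Grothendieck's cohomology-and-base-change. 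Hence $\widetilde\Higgs_X^{\infty}\simeq \Vs$ is a scheme.

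Next, the compatibility condition becomes a closed condition. The formal completion $\widehat\varphi$, read off as an $n\times n$ matrix using $\phi$ and the trivialization $dt$ of \eqref{higgs:e:dt}, defines an $S$-point of $M_n(\As^{\infty})$ and hence gives a morphism of schemes
$$
\operatorname{fc}\colon \widetilde\Higgs_X^{\infty}\longrightarrow M_n(\As^{\infty}).
$$
Sending $(a_1,\dots,a_n)$ to the companion matrix $M$ of Definition \ref{higgs:d:higgs}(3) gives a closed immersion
$$
c\colon \As\hookrightarrow M_n(\As^{\infty}),
$$
since its image is cut out by fixing $(n-1)^2$ matrix entries to $0$, $n-1$ entries to $1$, and identifying the remaining $n$ entries of the last column with $(\pm a_i)$. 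Then $\Higgs_X^{\infty}$ is precisely the fibre product
$$
\Higgs_X^{\infty} \;=\; \widetilde\Higgs_X^{\infty}\times_{M_n(\As^{\infty})} \As,
$$
and since $c$ is a closed immersion so is $\Higgs_X^{\infty}\hookrightarrow \widetilde\Higgs_X^{\infty}\times \As$. Thus $\Higgs_X^{\infty}$ is represented by a closed subscheme of this product.

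The principal technical obstacle is the construction of $\Vs$ over $\U_X^{\infty}$: since $\U_X^{\infty}$ is only pro-smooth (not of finite type), the standard proper-and-flat cohomology-and-base-change statements do not apply directly. The natural remedy is to exhaust $\U_X^{\infty}$ by its finite-type truncations, produce the linear scheme on each truncation via Grothendieck's two-term complex representing $p_\ast(\Endc(\mathcal{E})\otimes \omega)$, and then patch them compatibly into an inverse limit. Once $\Vs$ is in place, the remaining steps (definition of $\operatorname{fc}$, closedness of $c$, and the fibre-product construction) are essentially formal.
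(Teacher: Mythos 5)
Your proposal follows essentially the same route as the paper: build the linear scheme $\Vs\to\U_X^{\infty}$ representing triples $(E,\phi,\varphi)$ without the compatibility constraint, then cut out $\Higgs_X^{\infty}$ by the companion-matrix condition. Your fibre-product formulation over $M_n(\As^{\infty})$ and your remark on exhausting $\U_X^{\infty}$ by finite-type truncations merely make explicit two points the paper leaves as ``one can check''; the argument is correct.
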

\begin{proof}
Let $\U_X^{\infty}$ be the fine moduli space of rank $n$ vector bundles on $X$ with formal trivialization (see \cite{AA}). Then, there exists a universal object $(\Ec,\Phi)$, where $\Ec$ is a rank $n$ vector bundle on $X\times \U_X^{\infty}$ and $\Phi$ a formal trivialization of $\Ec$ along $\{x\}\times \U_X^{\infty}$.

Let $\Vs$ be the fiber bundle associated to the vector bundle on $X\times \U_X^{\infty}$:
$$\Homc_{X\times \U_X^{\infty}} (\Ec,\Ec \otimes \omega)\,,$$
$\omega$ being the pullback of $\w$ to $X\times \U_X^{\infty}$. Let $f\colon \Vs \to \U_X^{\infty}$ be the composition of the natural morphism $\Vs \to X\times \U_X^{\infty}$ with the projection onto $\U_X^{\infty}$. 

Let $\Ec_{\Vs}$ be the vector bundle on $X\times \Vs$ defined by the pullback $(1\times f)^{\ast}\Ec$, denote by
$$\Phi_{\Vs}\colon \widehat \Ec_{\Vs,x\times \Vs}\iso \Oc_{\Vs}[[z]]^n$$
the induced formal trivialization of $\Ec_{\Vs}$ along $x\times \Vs$
and let:
$$\varphi \colon \Ec_{\Vs} \to \Ec_{\Vs}\otimes \omega_{\Vs}$$
be the universal morphism. 

One has that the points of $\Vs$ with values in a $k$-scheme $S$, $g\colon S\to \Vs$, are triples $(E_S,\varphi_S,\phi_S)$ where $E_S$ is the pullback of $\Ec_{\Vs}$ to $X\times S$ by the morphism $1\times g$, $\varphi_S$ is the pullback of $\varphi$ and $\phi_S$ is the pullback of the formal trivialization.

Bearing in mind the induced morphism:
$$\widehat \varphi \colon \widehat \Ec_{\Vs,x\times \Vs} \to \widehat \Ec_{\Vs,x\times \Vs}\otimes \widehat \omega_{\Vs,x\times \Vs}$$
it follows (see Definition \ref{higgs:d:higgs}) that the points of $\Higgs_X^{\infty}$ with values in $S$ are  points of $\Vs$ with values in $S$, $(E_S,\varphi_S,\phi_S)$, for which the composition $(\Phi_{\Vs,S}\otimes dt_{\Vs,S})\circ \widehat \varphi_S \circ \Phi_{\Vs,S}^{-1}$ is defined by a matrix of the type:
$$\left( \begin{array}{cccccc}
0 & & & & 0 & (-1)^{n+1}a_n\\
1 & \cdot & & & \cdot & (-1)^na_{n-1} \\
\cdot & 1 &\cdot  & & \cdot & \cdot \\
\cdot & & \cdot & \cdot & \cdot & \cdot \\
\cdot & & & \cdot & 0 & -a_2 \\
0 & \cdot & & 0 & 1 & a_1
\end{array} \right)$$
for arbitrary $(a_1,\dots ,a_n)\in \As(S)$. One can check that this is a closed condition.
\end{proof}

\subsection{Krichever map.}\qquad
Let us denote $V=k((z))^n$ and $V^+=k[[z]]^n$. Recall that the infinite Grassmannian $\Gr(V)$ associated to the couple $(V,V^+)$ is the infinte dimensional $k$-scheme whose rational points are $k$-vector subspaces $W$ of $V$ such that $W\cap V^+$ and $V/W+V^+$ are finite dimensional subspaces (see \cite{AMP}).

Recall also (\cite{AA}, \cite{Mu}) that the Krichever map
$$\U_X^{\infty} \to \Gr(V)$$
is defined by sending $(E,\phi)$ to $(t \circ \phi) \varinjlim_m \pi_{\ast}E(m)$ and for rational points maps  $(E,\phi)$ to $(t \circ \phi)\Hcoh^0(X-x,E)$.

\begin{defin}\label{higgs:d:Kr}
Let $(E,\phi,\varphi)$ be a point of $\Higgs^{\infty}(S)$. The Krichever map for the functor $\Higgs_X^{\infty}$ is defined by:
\begin{align*}
\Higgs^{\infty}_X(S)& \to \Gr(k((z))^n)(S)\times \As(S)\\
(E,\phi,\varphi) & \mapsto \big ((t \circ \phi) \varinjlim_m \pi_{\ast}E(m),p_{\widehat \varphi}(T)\big )
\end{align*}
where $\pi \colon X\times S \to S$ and $p_{\widehat \varphi}(T)$ is the characteristic polynomial of the formal Higgs field $\widehat \varphi$.
\end{defin}

\begin{remark}
For rational points:
\begin{align*}
\Higgs^{\infty}_X(k)& \to \Gr(k((z))^n)(k)\times \As(k)\\
(E,\phi,\varphi) & \mapsto \big ((t \circ \phi) \Hcoh^0(X-x,E),p_{\widehat \varphi}(T)\big )
\end{align*}
\end{remark}

\begin{thm}\label{higgs:t:KrIny}
 The Krichever map is injective.
\end{thm}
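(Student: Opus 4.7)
The plan is to reduce injectivity to two known ingredients: the injectivity of the classical Krichever map $\U_X^{\infty}\to \Gr(V)$ (see \cite{AA}, \cite{Mu}), and the fact that on the integral curve $X$, sections of a locally free sheaf are detected by their formal germs along a closed point. Fix $(E,\phi,\varphi)$ and $(E',\phi',\varphi')$ in $\Higgs_X^{\infty}(S)$ with equal Krichever image. Equality of the first components in $\Gr(V)(S)$ and injectivity of the classical Krichever map produce a (necessarily unique) isomorphism $f\colon E\iso E'$ of vector bundles on $X\times S$ compatible with $\phi$ and $\phi'$. This supplies the first half of the equivalence required by Definition \ref{higgs:d:higgs}(4).

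Next, I would exploit the rigid shape that condition (3) of Definition \ref{higgs:d:higgs} imposes on the formal Higgs field. Equality of the second components in $\As(S)$ gives $p_{\widehat\varphi}(T)=p_{\widehat\varphi'}(T)$, so the companion matrices $M$ and $M'$ appearing in the defining squares for $\widehat\varphi$ and $\widehat\varphi'$ coincide. Since $\widehat f$ identifies $\phi$ with $\phi'$, the two squares paste along their common bottom row $M$ to yield
$$(\widehat f\otimes \Id)\circ \widehat\varphi \;=\; \widehat\varphi'\circ \widehat f;$$
equivalently, $f$ intertwines the two formal Higgs fields.

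It remains to globalise this formal identity. Consider the section
$$\sigma \;:=\; \varphi'\circ f \;-\; (f\otimes \Id)\circ \varphi \;\in\; \Hcoh^0\bigl(X\times S,\,\Homc_{X\times S}(E,E'\otimes \omega_{X\times S/S})\bigr).$$
By the previous paragraph, the image of $\sigma$ in the formal completion along $x\times S$ vanishes. Since $\Homc(E,E'\otimes \omega_{X\times S/S})$ is locally free (hence torsion-free) on $X\times S$ and $X$ is an integral smooth curve, the completion map along $x\times S$ is injective on global sections; consequently $\sigma=0$ identically on $X\times S$. Combined with the compatibility of $f$ with $\phi$ and $\phi'$, this exhibits $(E,\phi,\varphi)\sim (E',\phi',\varphi')$ in the sense of Definition \ref{higgs:d:higgs}(4), proving the theorem.

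The step requiring the most care is the last one, namely injectivity of the completion map along $x\times S$ in the relative setting. This reduces fibrewise to the classical fact that a nonzero section of a torsion-free coherent sheaf on an integral curve has nonzero germ at any closed point; on the relative side one then invokes that a section of a locally free sheaf on $X\times S$ which vanishes on every geometric fibre vanishes identically. This is the only non-formal ingredient in the argument, and isolating it cleanly is where the proof truly rests.
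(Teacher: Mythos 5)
Your argument is correct and follows essentially the same route as the paper: reduce to the injectivity of the classical Krichever map for $\U_X^{\infty}$, note that equality of the characteristic polynomials forces the two companion matrices $M$ to coincide so that $f$ intertwines the formal Higgs fields, and conclude that the two triples are equivalent in the sense of Definition \ref{higgs:d:higgs}. The only difference is that you spell out the globalization step (passing from the formal identity along $x\times S$ to $\varphi'\circ f=(f\otimes\Id)\circ\varphi$ on all of $X\times S$), which the paper compresses into the assertion that the diagram of Remark \ref{higgs:r:diagrconm} commutes. One caution on that final step: the reduction to ``vanishes on every geometric fibre'' does not suffice when $S$ is non-reduced (for $S=\Spec k[\epsilon]/(\epsilon^2)$ the section $\epsilon$ vanishes on the unique geometric fibre without being zero); the correct justification is direct --- for an affine $U\ni x$ over which $\Homc(E,E'\otimes\omega)$ trivializes and $S=\Spec R$, the completion map $\Gamma(U\times S,\Homc(E,E'\otimes\omega))\to R[[z]]^{n^2}$ is injective by Krull's intersection theorem on the integral curve $X$ combined with flatness of $R$ over $k$, and restriction to the dense open $U\times S$ is injective on sections of a locally free sheaf because $X$ is integral --- so your conclusion $\sigma=0$ stands.
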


\begin{proof}
 Let $(E,\phi,\varphi)$ and $(E',\phi',\varphi')$ be two points in $\Higgs^{\infty}_X$ with values in the $k$-scheme $S$ and assume that:
$$(t \circ \phi) \varinjlim_m \pi_{\ast}E(m)=(t \circ \phi') \varinjlim_m \pi_{\ast}E'(m)$$
$$p_{\widehat \varphi}(T)=p_{\widehat \varphi'}(T)\,.$$
Since Krichever map is injective for $\U_X^{\infty}$ (see \cite{AA},\cite{Mu}), there exists an isomorphism $E\iso E'$ compatible with formal trivializations $\phi$ and $\phi'$. Because both families verifies the third condition of Definition \ref{higgs:d:higgs} it follows that the diagram:
$$\xymatrix{
E \ar[rrr]^{\varphi} \ar[dd]^{\wr} \ar[rd] & & & E\otimes \omega \ar[dl] \ar[dd]^{\wr} \\
& \Oc_S[[z]]^n \ar[r]^M & \Oc_S[[z]]^n & \\
E' \ar[ur] \ar[rrr]^{\varphi'} & & & E'\otimes \omega \ar[ul]
}$$
is commutative, therefore $(E,\phi,\varphi) \sim (E',\phi',\varphi')$ (see remark \ref{higgs:r:diagrconm}).
\end{proof}

Now we want to characterize the image of the Krichever map. It is known (\cite{AA,Mu}) that the image of the Krichever map:
$$\U_X^{\infty} \hookrightarrow \Gr(V)$$
consists of those points $W\in \Gr(V)$ such that $A\cdot W\subseteq W$ (where $A=\Hcoh^0(X-x,\Oc_X)$, see \cite{MP1}).
Since
$$\Higgs_X^{\infty} \hookrightarrow \Gr(V)\times \As$$
takes values in $\U_X^{\infty}\times \As$, we are interested in studying the image of 
$$\Higgs_X^{\infty} \hookrightarrow \U_X^{\infty}\times \As\,.$$

Consider the natural map:
$$k((z))\otimes_k V \to V=k((z))\otimes_{k((z))}V\,.$$
If $U$ and $W$ are vector subspaces of $k((z))$ and $V$ respectively, we denote $U\cdot W$ the image of $U\otimes_k W \to V$.

\begin{prop}\label{higgs:p:WU}
 Take $L$ be a line bundle on $X$ and $\phi_L$ a formal trivialization of $L$ at $p$. Consider $(E,\phi_E)\in \U_X^{\infty}(k)$ and let:
$$W=\Hcoh^0(X-x,E)\in \Gr(V)(k)\,,\qquad U=\Hcoh^0(X-x,L)\in \Gr(k((z)))(k)$$
be the points they define via the Krichever map. Then:
$$U\cdot W=\Hcoh^0(X-x,L\otimes_{\Oc_X}E)\in \Gr(V)(k)\,.$$
\end{prop}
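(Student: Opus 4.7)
The plan is to reduce the statement to commutative algebra by exploiting the fact that $X-x$ is affine (the complement of a single point in a smooth projective curve). The key observation is that the Krichever embedding is compatible with tensor products of coherent sheaves once one uses the natural tensor formal trivialization.

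First, I would equip $L \otimes_{\Oc_X} E$ with the formal trivialization $\phi_L \otimes \phi_E$ at $x$, giving an identification $\widehat{(L\otimes E)}_x \iso \widehat{\Oc}_{X,x}^{\oplus n}$. The image of $\Hcoh^0(X-x, L \otimes E)$ in $V$ via $t \circ (\phi_L \otimes \phi_E)$ is by definition its Krichever-associated subspace. By functoriality of formal completion, the expansion at $x$ of a product $u \cdot w$ (with $u \in \Hcoh^0(X-x,L)$ and $w \in \Hcoh^0(X-x,E)$) is precisely the $k((z))$-module product $(t\circ\phi_L)(u) \cdot (t\circ\phi_E)(w)$ in $V$. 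This already yields the inclusion $U\cdot W \subseteq \Hcoh^0(X-x, L \otimes E)$ as subspaces of $V$.

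For the reverse inclusion, set $A = \Hcoh^0(X-x, \Oc_X)$. Since $X-x$ is affine, the natural map
$$\Hcoh^0(X-x,L) \otimes_A \Hcoh^0(X-x,E) \;\longrightarrow\; \Hcoh^0(X-x, L\otimes E)$$
is an isomorphism, so every section of $L\otimes E$ over $X-x$ is a finite $A$-linear combination of products $u\cdot w$. The $k$-linear multiplication $U \otimes_k W \to V$ factors through $U \otimes_A W$, because $k((z))$ is commutative and acts diagonally on $V=k((z))^n$, giving $(au)\cdot w = u\cdot(aw)$ in $V$ for every $a\in A$. Consequently $U\cdot W$ contains the image of $\Hcoh^0(X-x,L\otimes E)$ in $V$, which closes the chain.

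The last sanity check is that $\Hcoh^0(X-x, L \otimes E) \hookrightarrow V$ is injective; this holds because $X$ is integral and $L \otimes E$ is locally free, so no nonzero section can have vanishing formal expansion at $x$. The only delicate point is verifying the compatibility of the tensor formal trivialization with the Krichever embedding; once that is fixed, the proof is a routine application of affineness of $X-x$ and commutativity of $k((z))$.
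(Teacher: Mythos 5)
Your argument is correct and follows essentially the same route as the paper, which deduces the result from the surjectivity of $\Hcoh^0(X-x,L)\otimes_k \Hcoh^0(X-x,E) \to \Hcoh^0(X-x,L\otimes_{\Oc_X}E)$ together with the affineness of $X-x=\Spec A$. You merely spell out the two steps the paper leaves implicit (the $A$-module tensor product being an isomorphism on the affine open, and the compatibility of formal expansions with products), so no further comment is needed.
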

\begin{proof}
It follows from the surjectivity of
$$\Hcoh^0(X-x,L)\otimes_k \Hcoh^0(X-x,E) \to \Hcoh^0(X-x,L\otimes_{\Oc_X}E)\,.$$
Recall that $X-x=\Spec A$ is an affine open subset of $X$.
\end{proof}

 One can easily generalizes the above result for points with values in any $k$-scheme $S$. This implies that we can define the following morphism of schemes:
\begin{align*}
 h_U \colon \U_X^{\infty} & \to \U_X^{\infty}\\
W & \mapsto U\cdot W
\end{align*}

\begin{remark}
Note that in general we don't have a well-defined scheme morphism:
\begin{align*}
 \Gr(k((z)))\times \Gr(V) & \to \Gr(V)\\
(U,W) &\mapsto U\cdot W
\end{align*}
\end{remark}

Let $(a_1,\dots ,a_n)$ be a point in $\As$ with values in $S$ and let $T$ be the endomorphism given by the matrix:
\begin{equation}\label{higgs:e:T}
 T=\left( \begin{array}{cccccc}
0 & & & & 0 & (-1)^{n+1}a_n\\
1 & \cdot & & & \cdot & (-1)^na_{n-1} \\
\cdot & 1 &\cdot  & & \cdot & \cdot \\
\cdot & & \cdot & \cdot & \cdot & \cdot \\
\cdot & & & \cdot & 0 & -a_2 \\
0 & \cdot & & 0 & 1 & a_1
\end{array} \right)\in \End_{\Oc_S((z))}\Oc_S((z))^n
\end{equation}

\begin{defin} \label{d:est}
Let $W\in Gr(V)(S)$ be a point with values in $S$. A sub-$\Oc_S$-algebra $A$ of $\Oc_S((z))$ is said to \emph{stabilize} $W$ if it verifies:
\begin{enumerate}
\item $\Oc_S((z))/A$ is flat over $S$.
\item $A\cdot W\subseteq W$.
\end{enumerate} 
The \emph{stabilizer algebra} of $W$ is the maximum subalgebra of $\Oc_S((z))$, $A_W$, which stabilizes $W$.
\end{defin} 
%It exists because of Zorn lemma, since the family of sub-$\Oc_S$-algebras which stalize $W$ is not empty (it %contains $\Oc_S$) and the union of subalgebras stabilizing $W$ also stabilize $W$. 

If $S=\Spec k$ is the spectrum of a field $k$, flatness is straighforward and the stabilizer algebra coincides with those of \cite{Mu}:
$$ A_W=\{f\in k((z)) \vert f\cdot W\subseteq W\}\,.$$

\begin{prop}\label{higgs:p:T(W)}
Let $S$ be a $k$-scheme and $A_S:=\Hcoh^0(X-x,\Oc_X)\otimes_k S$. Given $W\in \U_X^{\infty}(S)$ it holds:
\begin{enumerate}
 \item $T(W)\in \Gr(V)(S)$.
 \item $T(W)$ is $A_S$-module.
 \item $A_{T(W)}=A_W=A_S\in \Gr(k((z)))(S)$.
\end{enumerate}
In particular, $T(W)\in \U_X^{\infty}(S)$.
\end{prop}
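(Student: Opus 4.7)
The plan is to reduce the three claims to the characterization of the image of the Krichever map $\U_X^\infty\hookrightarrow \Gr(V)$ recalled in the excerpt: first verify (1) and (2), and then invoke the classical Krichever correspondence to conclude (3) and the final ``in particular''.

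Claim (2) is the easiest. The coefficients $(a_1,\dots,a_n)$ lie in $\Oc_S[[z]]\subseteq \Oc_S((z))$, so $T\in M_n(\Oc_S((z)))$ is $\Oc_S((z))$-linear and therefore commutes in $\End V$ with multiplication by any $f\in A_S\subseteq \Oc_S((z))$. Combined with $A_S\cdot W\subseteq W$ (which holds since $W\in \U_X^\infty(S)$), this gives $A_S\cdot T(W)=T(A_S\cdot W)\subseteq T(W)$.

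For claim (1) the crucial observation is that $T(V^+)\subseteq V^+$, again because the entries of $T$ lie in $\Oc_S[[z]]$. Writing $V^+$ as a free rank-one module over $\Oc_S[[z]][T]/(p(T))$ generated by the first basis vector, a short computation with the companion form yields $V^+/T(V^+)\cong \Oc_S[[z]]/(a_n)$; the separability assumption on $p(T)$ controls this quotient and, in particular, makes it flat over $S$. The short exact sequence
\[
0 \longrightarrow W\cap T^{-1}(V^+) \longrightarrow W \longrightarrow T(W)/(T(W)\cap V^+) \longrightarrow 0,
\]
together with $V^+\subseteq T^{-1}(V^+)$ and an analogous sequence controlling $V/(T(W)+V^+)$, then let me transport the finiteness and flatness conditions defining $\Gr(V)(S)$ from $(W\cap V^+,\, V/(W+V^+))$ to the corresponding data for $T(W)$, so that $T(W)\in \Gr(V)(S)$.

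Combining (1) and (2) with the characterization of the image of the Krichever map as the points of $\Gr(V)$ stabilized by $A_S$, I obtain $T(W)\in \U_X^\infty(S)$, which is the ``in particular'' assertion. Applied to both $W$ and $T(W)$, the classical description of the stabilizer algebra of a $\U_X^\infty$-point (see \cite{MP1,Mu,AA}) then yields $A_{T(W)}=A_S=A_W$; finally $A_S\in \Gr(k((z)))(S)$ is the classical Krichever image of the structure sheaf of $X$. The main obstacle is step (1): $T$ is not invertible in general, only ``upper triangular'' with respect to $V^+$, and one must invoke the separability hypothesis to ensure that the defect $V^+/T(V^+)$ remains flat and suitably finite over $S$, so that the Fredholm-type commensurability class of $W$ with respect to $V^+$ is preserved by $T$.
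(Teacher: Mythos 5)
Your proof is correct and takes essentially the same route as the paper, whose entire argument is the one-line remark that the claims ``follow from the $\Oc_S((z))$-linearity of $T$ and the properties of the stabilizer algebra $A_W$ of $W$''; you have simply supplied the details that remark leaves implicit (the computation $V^+/T(V^+)\cong \Oc_S[[z]]/(a_n)$, the transport of the commensurability and flatness conditions to $T(W)$, and the appeal to the Krichever characterization of $\U_X^{\infty}\hookrightarrow \Gr(V)$ via $A$-stability). No discrepancy to report.
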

\begin{proof}
It follows from the $\Oc_S((z))$-linearity of $T$ and the properties of the stabilizer algebra $A_W$ of $W$.
\end{proof}

We can now define another endomorphism of $\U_X^{\infty}$:
\begin{align*}
 T \colon \U_X^{\infty} & \to \U_X^{\infty}\\
W & \mapsto T(W)
\end{align*}

\begin{thm}\label{higgs:t:carHiggs}
Let $\Omega$ be the point in $\Gr(k((z)))(S)$ defined by the relative canonical sheaf $\omega_{X\times S/S}$.

A couple $(W,p(T))\in (\U_X^{\infty}\times \As)(S)$ lies on the image of the morphism
$$\Higgs_X^{\infty}\hookrightarrow \U_X^{\infty}\times \As$$ 
if and only if the restriction of $T$ (equation \ref{higgs:e:T}) to $W$ takes values into $W\cdot \Omega$:
$$\xymatrix{
\Oc_S((z))^n \ar[r]^T & \Oc_S((z))^n \\
W \ar@{^(->}[u] \ar[r]^{T_{|W}} & W\cdot \Omega \ar@{^(->}[u]
}$$
For rational points this condition translates into $T(W) \subseteq W\cdot \Omega$.

%Analogamente, la imagen del morfismo de Krichever coincide con el n\'ucleo de las aplicaciones:
%$$\xymatrix{
%\U_X^{\infty}\ar@<4pt>[r]^{T} \ar@<-2pt>[r]_{h_{\Omega}} &\U_X^{\infty}
%}$$
\end{thm}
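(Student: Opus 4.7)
The plan is to prove the two implications separately, both by working in $V = \Oc_S((z))^n$ where the global $A_S$-module structure and the formal $\Oc_S[[z]]$-module structure meet. The forward direction essentially unwinds the definition of $\Higgs_X^\infty$, while the converse uses a Beauville--Laszlo style reconstruction: a morphism $E \to E\otimes\omega$ of $\Oc_X$-modules is determined by the compatible data of its restriction to $X-x$ and its formal completion at $x$, and both will be encoded by the single matrix $T$.

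\medskip

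For the forward implication, start with $(E,\phi,\varphi)\in\Higgs_X^\infty(S)$ mapping to $(W,p(T))$. Taking global sections on $X-x$ of $\varphi\colon E\to E\otimes\omega$ and embedding everything into $V$ via the trivialisation $t\circ\phi$ (respectively $t\circ\phi\otimes dt$ on the target) produces a linear map $W\to H^0(X-x,E\otimes\omega)$, and by Proposition \ref{higgs:p:WU} the target is precisely $W\cdot\Omega\subseteq V$. It remains to identify this map with the restriction of $T$. This is exactly what the third condition of Definition \ref{higgs:d:higgs} asserts on the formal completion: the formal Higgs field $\widehat\varphi$ is, in the chosen trivialisations, the companion matrix $M$ of $p(T)$, and $M=T$ by construction. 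Since $W\subseteq V$ and the morphism of sheaves is determined by its behaviour on sections and on the formal completion, $T|_W$ and the induced map coincide, giving $T(W)\subseteq W\cdot\Omega$.

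\medskip

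For the converse, given $(W,p(T))$ with $T(W)\subseteq W\cdot\Omega$, first recover the rank $n$ vector bundle $E$ on $X\times S$ with formal trivialisation $\phi$ from $W$ by the Krichever correspondence for $\U_X^\infty$. The Higgs field is built as follows. The matrix $T$ with coefficients in $\Oc_S[[z]]$ provides an $\Oc_S((z))$-linear endomorphism of $V$; since it has scalar coefficients it commutes with the action of $A_S=H^0(X-x,\Oc_X)\otimes_k\Oc_S$, so $T|_W\colon W\to W\cdot\Omega$ is $A_S$-linear. By Proposition \ref{higgs:p:WU} this is precisely a morphism of $\Oc_{X-x}$-modules $E|_{X-x}\to(E\otimes\omega)|_{X-x}$. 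On the other hand, the same matrix $T$ applied on $\Oc_S[[z]]^n\cong\widehat E_{x\times S}$, transported through $\phi$ and $\phi\otimes dt$, yields an $\Oc_S[[z]]$-linear map $\widehat\varphi\colon\widehat E_{x\times S}\to\widehat E_{x\times S}\otimes\widehat\omega_{x\times S}$. Both constructions come from one and the same linear map on $V$, so they automatically agree on the punctured formal disc, and a Beauville--Laszlo type gluing produces a unique morphism $\varphi\colon E\to E\otimes\omega$ restricting to each. The third condition of Definition \ref{higgs:d:higgs} holds with matrix $M=T$ by construction, and since $T$ is the companion matrix of $p(T)$ one has $p_{\widehat\varphi}(T)=p(T)$, so $(E,\phi,\varphi)$ lies in $\Higgs_X^\infty(S)$ with Krichever image $(W,p(T))$.

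\medskip

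The main obstacle is the converse direction, and more specifically the patching step: one must justify that the local datum $\widehat\varphi$ and the global datum $T|_W$ glue to a bona fide morphism of sheaves on $X\times S$, relatively over $S$. This is exactly the point where flatness of $\Oc_S((z))/A_S$ (as built into the definition of $\Gr(V)$ and the stabiliser algebra in Definition \ref{d:est}) is needed, together with the fact that $V$ is the common localisation; once these hypotheses are in place the Beauville--Laszlo gluing applies and the compatibility is automatic because both pieces are restrictions of the single $\Oc_S((z))$-linear map $T$ on $V$. The remainder of the argument is a bookkeeping exercise, including the easy verification that the rational-point statement $T(W)\subseteq W\cdot\Omega$ is the specialisation of the diagram to $S=\Spec k$.
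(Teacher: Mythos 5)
Your proof is correct and follows essentially the same route as the paper's: the forward direction unwinds Definition \ref{higgs:d:higgs} together with Proposition \ref{higgs:p:WU}, and the converse recovers $(E,\phi)$ from $W$ via the Krichever correspondence and then extends $T_{|W}\colon W\to W\cdot\Omega$ across $x\times S$ using the $\Oc_S((z))$-linearity of $T$ — which is exactly the gluing of the affine and formal data that you phrase as a Beauville--Laszlo argument. Your write-up is somewhat more explicit than the paper about why the two local pieces agree on the punctured formal disc, but the underlying mechanism is identical.
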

\begin{proof}
Direct implication follows from the definition of the Krichever morphism. Let's see how to recover the geometric data from $(W,p(T))\in \U_X^{\infty}(S)\times \As(S)$. By hypothesis, $T_{|W}$ takes values in $W\cdot \Omega$ and we already know that $W\in \U_X^{\infty}(S)$, so we can recover a rank $n$ vector bundle on $X\times S$ together with its formal trivialization $\phi$ (see \cite{AA}, \cite{Mu}). By Proposition \ref{higgs:p:WU} we have that $ W\cdot \Omega$ is the point in $\Gr(k((z))^n)(S)$ defined by $E\otimes_{\Oc_X}\omega_{X\times S/S}$, that is:
$$W\cdot \Omega=\varinjlim_m \pi_{S,\ast} (E\otimes_{\Oc_X}\omega_{X\times S/S})(m(x\times S))\,.$$
Therefore, $T_{|W}$ translate into the morphism:
$$T_{|W}\colon \varinjlim_m \pi_{S,\ast} E(m(x\times S)) \to \varinjlim_m \pi_{S,\ast} (E\otimes_{\Oc_X}\omega_{X\times S/S})(m(x\times S))$$
where $\pi_S \colon X\times S \to S$. Thus, to recover the Higgs field 
$$E\to E\otimes_{\Oc_X}\omega_{X\times S/S}$$
we have to extend $T_{|W}$ to $x\times S$. But, this follows from the fact that $T$ is $\Oc_S((z))$-linear (see equation \ref{higgs:e:T}). In addition, the compatibility condition between $\varphi$ and $\phi$ is obtained by construction.
%(luego conmuta con los estabilizadores) y por la proposici\'on \ref{higgs:p:T(W)} sabemos que los estabilizadores de $W$ y %$T(W)\subseteq W\cdot \Omega$ son el mismo, podemos extender el morfismo a un morfismo de $\Oc_{X\times S}$-m\'odulos:
%$$\varphi \colon E\to E\otimes_{\Oc_{X\times S}}\omega_{X\times S}$$
%(en otras palabras, $T$ es compatible con las filtraciones dadas por el orden del polo en el punto $p$). Por construcci\'on %$\varphi$ es compatible con la trivializaci\'on formal $\phi$ en el sentido de la definici\'on de $\Higgs_X^{\infty}$.
\end{proof}

This result allow us to prove the following theorem:

\begin{thm}\label{higgs:t:rep_en_U_X}
 The functor $\Higgs_X^{\infty}$ is representable by a closed subscheme of $\U_X^{\infty}\times \As$.
\end{thm}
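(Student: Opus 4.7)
The plan is to derive this theorem directly from the preceding results. Theorem \ref{higgs:t:repHiggs} shows that $\Higgs_X^{\infty}$ is a scheme, Theorem \ref{higgs:t:KrIny} that the Krichever morphism
$$\Higgs_X^{\infty} \longrightarrow \U_X^{\infty}\times \As$$
is injective, and Theorem \ref{higgs:t:carHiggs} describes its image set-theoretically as the locus of pairs $(W,p(T))$ with $T(W)\subseteq W\cdot\Omega$. The proof therefore reduces to upgrading this set-theoretic image description to a closed subscheme structure on $\U_X^{\infty}\times \As$ and identifying the result with $\Higgs_X^{\infty}$.

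First I would assemble the natural universal data over $B:=\U_X^{\infty}\times \As$: the universal subspace $\Wc$ coming from the Krichever embedding $\U_X^{\infty}\hookrightarrow \Gr(V)$; the universal $\Oc_B((z))$-linear endomorphism $\Tc$ obtained from the universal polynomial on $\As$ through the companion-matrix construction of equation \ref{higgs:e:T}; and the universal subspace $\Wc\cdot\Omega$, which is well defined because, with $\Omega$ the fixed point of $\Gr(k((z)))$ determined by $\w$, the assignment $W\mapsto W\cdot\Omega$ is a morphism of schemes $\U_X^{\infty}\to \U_X^{\infty}$ (cf.\ the discussion after Proposition \ref{higgs:p:WU}). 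Theorem \ref{higgs:t:carHiggs} then reformulates membership in the Krichever image as the vanishing of the composition
$$\Wc \hookrightarrow \Oc_{B}((z))^n \xrightarrow{\Tc} \Oc_{B}((z))^n \longrightarrow \Oc_{B}((z))^n/(\Wc\cdot\Omega).$$

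The final step is to check that this vanishing condition cuts out a closed subscheme of $B$. I would work in a Schubert-cell cover of $\Gr(V)$ (see \cite{AMP}): on such a chart, both $\Wc$ and $\Wc\cdot\Omega$ are represented as graphs of $\Oc_B$-linear maps with respect to a fixed splitting of $V$ involving $V^{+}=k[[z]]^n$; the above composition then becomes an $\Oc_B$-linear map between (pro-)free $\Oc_B$-modules, and its vanishing is encoded by the vanishing of a system of regular functions on the chart. These local equations glue to a closed subscheme of $B$, which combined with Theorems \ref{higgs:t:KrIny} and \ref{higgs:t:carHiggs} yields the theorem.

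The main obstacle I anticipate is the infinite-dimensional bookkeeping in this last step: one must verify that $\Wc$ and $\Wc\cdot\Omega$ admit compatible, flat pro-finite descriptions over $B$, so that the quotient $\Oc_{B}((z))^n/(\Wc\cdot\Omega)$ is a well-behaved sheaf and its vanishing locus is a genuine closed subscheme rather than merely a set of points. Once one passes to Schubert cells, the problem becomes finitely presented and closedness is standard.
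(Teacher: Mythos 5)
Your proposal follows essentially the same route as the paper: both reduce the theorem to the closedness of the locus where $T(W)\subseteq W\cdot\Omega$ inside $\U_X^{\infty}\times \As$, using Theorems \ref{higgs:t:KrIny} and \ref{higgs:t:carHiggs} together with the fact that $W\cdot\Omega$ is again a point of $\Gr(V)$ (Proposition \ref{higgs:p:WU}). The only difference is in the last step: where you propose to verify closedness by hand on Schubert-cell charts, the paper invokes a ready-made lemma (\cite[Lemma 2.4.6]{AA}) asserting that containment of one quasi-coherent subsheaf in another is a closed condition when the ambient quotient is locally a projective limit of free coherent sheaves --- exactly the pro-finite flatness issue you correctly flag as the main technical point.
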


\begin{proof}
By \ref{higgs:t:carHiggs}, the image of:
$$\Higgs_X^{\infty} \hookrightarrow \U_X^{\infty} \times \As$$
consists of pairs $(W,p(T))$ such that $T_{|W}(W)\subseteq W\cdot \Omega$. By proposition \ref{higgs:p:WU} we have that $W\cdot \Omega \in \Gr(V)$. The statement then follows from the next result  (see \cite[Lemma 2.4.6]{AA}):

Let $S$ be a $k$-scheme and let $\F$ be a sheaf of $\Oc_S$-modules. Let $\F_1$ and $\F_2$ be two quasi-coherent  subsheaves of $\F$ such that locally $\F /\F_2 \iso \varprojlim_i L_i$, where $L_i$ are free coherentsheaves.
Then, the points $f\colon S'\to S$ such that $f^*(\F_1)\subseteq f^*(\F_2)$, as subsheaves of $f^*(\F)$, are the points of a closed subset of $S$.
\end{proof}

\section{Spectral Construction.}\label{higgs:s:spectral}\quad

We call formal base curve to:
$$\widehat X_x:=\Spf k[[z]]\,.$$
%Recall that we also have a formal trivialization of $\omega_X$:
%$$\widehat \omega_{X,x}\iso \widehat \Oc_{X,x}\iso k[[z]]$$
%and write
%$$\Ts^{\ast}\widehat X_x:=\Spf \Sc^{\bullet}\widehat \omega_{X,x}^{-1}\iso \Spf k[[z]][T]$$
%for the formal cotangent space to $X$ at $p$, where $\Sc^{\bullet}$ denotes the symmetric algebra over $\widehat %\Oc_{X,x}$ and $T$ is an indeterminate.
%
%Given a rational point $p(T)\in \As(k)$, that is, a monic polynomial in $T$ with coefficients in $\widehat %\omega_{X,x}\iso k[[z]]$:
%$$p(T)=T^n +a_1 T^{n-1}+ \cdots +a_n$$
%(see \ref{higgs:p:As}), we define $\widehat{\Ic}$ as the sheaf of ideals of $\Sc^{\bullet}\widehat %\omega_{X,x}^{-1}$ generated by the image of:
% \begin{align*}
%\widehat \omega_{X,x}^{-n} & \to \oplus_{i\geq 0} \widehat \omega_{X,x}^{-i} \\
% \xi & \mapsto (\xi \otimes a_n,\xi \otimes a_{n-1},\dots ,\xi \otimes a_1,\xi, 0, 0, \dots )
%\end{align*}

 \begin{defin}
Let $p(T)$ be a point in $\As(S)$. We define the \emph{formal spectral curve} associated to $(\widehat X,p(T))$ as:
$$\Spf \Oc_S[[z]][T]/p(T)\,.$$
\end{defin}

Let $(E,\phi,\varphi)$ be  point in $\Higgs_X^{\infty}$ with values in $S$. Recall that the formal Hitchin map (see Definition \ref{higgs:d:Hmap}):
$$\Hc^{\infty} \colon \Higgs_X^{\infty} \to \As$$
sends $(E,\phi,\varphi)$ to the characteristic polynomial $p_{\widehat \varphi}(T)\in \As(S)$ of the formal Higgs field  $\widehat \varphi$.

\begin{prop}\label{higgs:p:Higgs_espectral}
Let $S$ be a $k$-scheme. Given $(E,\phi,\varphi)\in \Higgs_X^{\infty}(S)$, there exists a formal trivialization of $E$ along $x\times S$:
$$\phi_{\varphi} \colon \widehat E_{x\times S}\iso \Oc_S[[z]][T]/p_{\widehat \varphi}(T)$$
in such a way that  $M$ (see Definition \ref{higgs:d:higgs}) corresponds to the multiplication by $T$ in $\Oc_S[[z]][T]/p_{\widehat \varphi}(T)$.
\end{prop}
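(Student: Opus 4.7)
The plan is to construct $\phi_{\varphi}$ by postcomposing the given trivialization $\phi$ with a canonical identification of $\Oc_S[[z]]^n$ with $\Oc_S[[z]][T]/p_{\widehat \varphi}(T)$, and then to observe that, under this identification, the companion matrix $M$ of Definition \ref{higgs:d:higgs} becomes multiplication by $T$.

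First, since $p_{\widehat \varphi}(T)$ is a monic polynomial of degree $n$ in $T$ with coefficients in $\Oc_S[[z]]$, the quotient $\Oc_S[[z]][T]/p_{\widehat \varphi}(T)$ is a free $\Oc_S[[z]]$-module of rank $n$ with basis $\{1,T,T^2,\dots,T^{n-1}\}$. I would then define the $\Oc_S[[z]]$-linear isomorphism
$$\psi \colon \Oc_S[[z]]^n \iso \Oc_S[[z]][T]/p_{\widehat \varphi}(T)$$
sending the $i$-th standard basis vector $e_i$ to $T^{i-1}$, and set $\phi_{\varphi}:=\psi\circ\phi$.

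Next, I would compute the matrix of multiplication by $T$ with respect to the basis $(1,T,\dots,T^{n-1})$. For $i<n$ one has $T\cdot T^{i-1}=T^i$, which reproduces the subdiagonal of $1$'s in $M$. Using the convention $p_{\widehat \varphi}(T)=T^n-a_1T^{n-1}+\dots+(-1)^n a_n$ (with $a_i=\Tr(\Lambda^i\widehat \varphi)$), the relation $p_{\widehat \varphi}(T)=0$ in the quotient gives
$$T\cdot T^{n-1}=T^n=a_1T^{n-1}-a_2T^{n-2}+\dots+(-1)^{n+1}a_n,$$
which is precisely the last column of $M$. Hence $\psi$ intertwines the endomorphism of $\Oc_S[[z]]^n$ given by $M$ with the endomorphism of $\Oc_S[[z]][T]/p_{\widehat \varphi}(T)$ given by multiplication by $T$.

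Finally, combining this with condition (3) of Definition \ref{higgs:d:higgs}—which asserts that $\phi$ (together with the trivialization $dt$) conjugates $\widehat \varphi$ to $M$—the composition $\phi_{\varphi}=\psi\circ\phi$ conjugates $\widehat \varphi$ to multiplication by $T$, as required. There is no genuine obstacle here: the entire content of the proposition is the elementary linear-algebra fact that the matrix $M$ appearing in Definition \ref{higgs:d:higgs} is nothing other than the companion matrix of $p_{\widehat \varphi}(T)$ written in the basis $(1,T,\dots,T^{n-1})$ of $\Oc_S[[z]][T]/p_{\widehat \varphi}(T)$; the only care needed is to keep the sign conventions consistent between $p_{\widehat \varphi}(T)$ and the entries of $M$.
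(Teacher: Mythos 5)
Your proof is correct, and it is worth noting that it takes a more elementary route than the paper's. The paper argues abstractly: it invokes the spectral correspondence of Beauville--Narasimhan--Ramanan, observing that $\widehat \varphi$ (via Cayley--Hamilton) makes $\widehat E_{x\times S}$ into a rank-one module over the formal spectral algebra $\Oc_S[[z]][T]/p_{\widehat \varphi}(T)$, then asserts that one can \emph{choose} a trivialization of this line bundle and finally checks, using condition (3) of Definition \ref{higgs:d:higgs}, that $M$ becomes multiplication by $T$. You instead bypass the existence question entirely by writing the trivialization down: $\phi_{\varphi}=\psi\circ\phi$ with $\psi(e_i)=T^{i-1}$, reducing the whole proposition to the observation that $M$ is the companion matrix of $p_{\widehat \varphi}(T)$ in the basis $(1,T,\dots,T^{n-1})$ (your sign bookkeeping against $p_{\widehat \varphi}(T)=T^n-a_1T^{n-1}+\cdots+(-1)^na_n$ matches the last column of $M$ as displayed in the paper). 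What your approach buys is that it is constructive and avoids having to justify that a rank-one module over $\Oc_S[[z]][T]/p_{\widehat \varphi}(T)$ in a relative setting over an arbitrary base $S$ is actually free --- a point the paper's proof glosses over; moreover, since "$M$ corresponds to multiplication by $T$" is precisely the statement that $\phi_{\varphi}$ is linear over the spectral algebra, your explicit $\phi_{\varphi}$ \emph{is} the line-bundle trivialization whose existence the paper asserts. What the paper's phrasing buys is the geometric interpretation (the spectral construction) that is exploited in the rest of Section \ref{higgs:s:spectral}.
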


\begin{proof}
The proof uses the classical correspondence between a Higgs pair and the Jacobian of the spectral curve of \cite{BNR}. The formal Higgs field $\widehat \varphi$ endows $\widehat E_{x\times S}$ with a line bundle structure over the formal spectral curve $\Spf \Oc_S[[z]][T]/p_{\widehat \varphi}(T)$, so that we can choose a trivialization:
$$\widehat E_{x\times S}\iso \Oc_S[[z]][T]/p_{\widehat \varphi}(T) \,.$$
Using the given formal trivialization:
$$\phi \colon \widehat E_{x\times S}\iso \Oc_S[[z]]^n$$
we can define an $\Oc_S[[z]]$-algebra structure on $\Oc_S[[z]]^n$ and, because of the compatibility condition between $\phi$ and $\varphi$ of Definition \ref{higgs:d:higgs}, we get that $M$ corresponds to the multiplication by $T$ in $\Oc_S[[z]][T]/p_{\widehat \varphi}(T)$.
\end{proof}

%\begin{defin}\label{higgs:d:higgs}
% Se define el functor $\Higgs_X^{\infty}$ como el hacificado de:
%\begin{align*}
%\mathcal C & _ {k-esq} \rightsquigarrow \mathcal C_{sets}\\
%& S \mapsto \{(E,\phi,\varphi)\}/\sim
%\end{align*}
%donde
%\begin{enumerate}
%\item $E$ es un fibrado de rango $n$ sobre $X\times S$.
%\item $\varphi \colon E \to E \otimes \omega_{X\times S/S}$ es un campo de Higgs relativo.
%\item $\phi$ es una trivializaci\'on formal de $E$ a lo largo de $x\times S$, es decir, entendida como un %isomorfismo de $\widehat{(\Sc^{\bullet}\omega^{-1}/\Ic)_{x\times S}}$-m\'odulos:
% $$\phi \colon \widehat{E_{x\times S}}\iso \widehat{(\Sc^{\bullet}\omega^{-1}/\Ic)_{x\times S}}$$
%\item Diremos que $(E,\varphi,\phi)\sim(E',\varphi',\phi')$ cuando exista un isomorfismo de fibrados $E\iso E'$ %compatible con todos los datos. 
%\end{enumerate}
%\end{defin}

\subsection{Relative Infinite Grassmannian.}\label{higgs:ss:Grc}\quad

Observe that for each $p(T)\in \As(S)$ the $\Oc_S((z))$-module $V_S=\Oc_S((z))^n$ can be enriched with a natural  $\Oc_S((z))$-algebra structure:
$$V_S \iso \Oc_S((z))[T]/p(T)$$ 
using the standard basis $\{1,T,T^2,\dots ,T^{n-1}\}$. 

Now take a rational point $(E,\phi,\varphi)\in \Higgs_X^{\infty}(k)$ and denote by $\pi \colon X_{\varphi} \to X$ the spectral cover.

%$\pi_{\varphi} \colon X_{\varphi}\to X\times S$  the spectral cover, we have:
%$$\widehat{\pi_{\varphi,\ast}\Oc_{X_{\varphi},p}}\iso \Oc_S[[z]][T]/p_{\widehat \varphi}(T)$$
Using the formal trivialization of the Proposition \ref{higgs:p:Higgs_espectral}:
$$\phi_{\varphi} \colon \widehat E_x\iso k[[z]][T]/p_{\widehat \varphi}(T)$$
is easy to check that  
$$W=\phi_{\varphi}\Hcoh^0(X_{\varphi}-\pi^{-1}(x),E)$$
is a rational point of the infinite Grassmannian $\Gr(k((z))[T]/p_{\widehat \varphi}(T))$. 

The aim for this section is to make this construction uniform as $p(T)$ varies. This will lead us to define the relative infite Grassmannian. The construction is similar to that of the standard infinite Grassmannian (\cite{AMP}), so we will omit the proofs (see \cite{P} for further details).

%Note that these discrete sub-$\Oc_S$-module are not points with values in $S$ of the infinite Grassmannian
%$$\Gr(k((z))[T]/``p(T)")\,,$$
%because for each triple $(E,\phi,\varphi)$ the polynomial may change. That is, we have to find out an infinite %Grassmannian that allows us to make $p(T)$ vary, that allows to vary the spectral curve. Now we know that varying %$p(T)$ means moving over $\As$, so que question is: there exists a relative infinite Grassmannian over $\As$ whose %fibers over $S$-valued points are the Grassmannians of the above discrete sub-$\Oc_S$-modules?

%The answer is yes, and the relative infinte Grassmanniana we are about to define is in fact trivial over the base, %that is, it is isomorphic to $\Gr(k((z))^n)\times S$.

Let $p^{univ}(T)$ denote the universal polynomial of $\As$, that is, the point corresponding to the identity morphism $\Ide \colon \As \to \As$ (recall that by proposition \ref{higgs:p:As} $\As$ is representable) and think of $\Oc_{\As}[[z]][T]/p^{univ}(T)$ as the ring of the universal formal spectral cover.

Denote
$$V=\Oc_{\As} ((z))[T]/p^{univ}(T) \qquad V^+=\Oc_{\As} [[z]][T]/p^{univ}(T)$$
and consider the triple $(V,V^+,V_1)$ where $V_1=zV^+$. We have:

\begin{itemize}
\item $V^+$ is a sheaf of $V_1$-adics $\Oc_{\As}$-algebras ($V_1$ is a sheaf of ideals of $V^+$).
\item $V^+/V_1\iso \Oc_{\As}$ as $\Oc_{\As}$-algebras, $V_1$ is locally principal and $V$ is the localization of $V^+$ by the generator of $V_1$. Equivalentely, for each point of $\As$ there exists an open neighborhood $U$ such that:
$$(V_U,V^+_U,(V_1)_U)\iso (\Oc_U((z))[T]/p^U(T),\Oc_U[[z]][T]/p^U(T),z\Oc_U[[z]][T]/p^U(T))$$
\end{itemize}

Define the following sub-$V^+$-modules of $V$ by $V_i:=V_1^i$, that is, 
$$V_i=z^i\Oc_U[[z]][T]/p^U(T))$$
locally in $\As$.

There also exists a notion of conmensurability for sub-$\Oc_{\As}$-modules of $V$ and verifies the same properties as the standard conmensurability (see \cite{AMP}).

\begin{defin}
The relative infinite Grassmannian associated to the triple $(V,V^+,V_1)$ is the functor that associates to each $\As$-esquema $S$ the following set:
{\small$$\Grc(V)(S)=\left\{ \begin{gathered}
\text{quasicoherents sub-$\Oc_S$-modules  $W\subseteq V_S$ such that $V_S/W$ is}\\
\text{flat and for each point in $S$ there exists an open neighborhood $U$ and }\\
\text{$C$ conmensurable with $V^+$ such that $V_U/(W_U+C_U)=0$}\\
\text{and $W_U \cap C_U$ is loc. free of finite type}
\end{gathered}
\right\}$$}
\end{defin}

\begin{thm}
$\Grc(V)$ is representable by a $\As$-scheme (thus, in particular by a $k$-scheme) that we will still denote  $\Grc(V)$.
\end{thm}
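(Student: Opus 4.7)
The plan is to follow the construction of the absolute infinite Grassmannian in \cite{AMP}, exploiting the local triviality of the triple $(V,V^+,V_1)$ over $\As$: every point of $\As$ has an open neighborhood $U$ on which
$$(V_U,V^+_U,(V_1)_U) \iso (\Oc_U((z))[T]/p^U(T),\,\Oc_U[[z]][T]/p^U(T),\,z\Oc_U[[z]][T]/p^U(T)),$$
so that, Zariski-locally on $\As$, the whole setup becomes an $\Oc_U$-linear version of the Tate triple used in \cite{AMP}.

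The first step is to cover $\Grc(V)$ by a family of affine subfunctors. For each sub-$\Oc_{\As}$-module $C\subseteq V$ commensurable with $V^+$ I set
$$\Fb_C(S)=\{W\in\Grc(V)(S)\,\vert\,V_S=W\oplus C_S\}.$$
The defining properties of $\Grc(V)(S)$, together with the flatness of $V_S/W$, guarantee that every $S$-point of $\Grc(V)$ lies Zariski-locally on $S$ inside some $\Fb_C$, so these subfunctors jointly cover $\Grc(V)$; moreover the intersections $\Fb_C\cap\Fb_{C'}\hookrightarrow\Fb_C$ are open immersions by the same commensurability manipulations used in the absolute case of \cite{AMP}, which rely only on the $V^+$-module structure of $V$. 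Each $\Fb_C$ is then identified with a Hom-functor: a point $W\in\Fb_C(S)$ is the graph of a unique $\Oc_S$-linear map from a discrete sub-$\Oc_{\As}$-module of $V$ complementary to $C$ into $C_S$; locally on $\As$, this Hom-functor is representable by the spectrum of the symmetric algebra of a suitable pro-free $\Oc_U$-module, which is an (infinite-dimensional) affine $U$-scheme. Gluing these affine schemes along the open immersions above yields the desired structure of $\As$-scheme on $\Grc(V)$.

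The main obstacle is checking that the classical ingredients of \cite{AMP} — the identification of $\Fb_C$ with an affine Hom-scheme, the openness of $\Fb_C\cap\Fb_{C'}$ in $\Fb_C$, and the compatibility of commensurable submodules with base change — all survive relativization over $\As$. The flatness condition built into the definition of $\Grc(V)(S)$ is what makes this possible: it ensures that a splitting $V_S=W\oplus C_S$ is preserved by pullback, which is precisely the input needed to reduce the problem Zariski-locally on $\As$ to a coefficient-extension of the absolute case. Once this is settled, representability follows formally by the gluing argument above.
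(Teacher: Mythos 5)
Your proposal is correct and follows essentially the route the paper intends: the paper explicitly omits the proof, stating only that ``the construction is similar to that of the standard infinite Grassmannian (\cite{AMP})'', and your relativization over $\As$ via the local triviality of $(V,V^+,V_1)$, the covering by subfunctors $\Fb_C$ indexed by commensurable submodules, their identification with affine Hom-schemes, and the gluing along open immersions is precisely that construction carried out over the base. In effect you have supplied the details the paper delegates to \cite{AMP} and \cite{P}.
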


\begin{thm}
For all $\As$-scheme $S$ denote $V_S=V\widehat \otimes_{\Oc_{\As}}\Oc_S$. It holds:
$$\Grc(V_S)\iso \Grc(V)\times_{\As}S$$
In particular, for each rational point $p(T) \colon \Spec k \to \As$:
$$\Grc(V_k)=\Grc(k((z))[T]/p(T))$$
\end{thm}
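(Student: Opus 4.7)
The plan is to establish the isomorphism as functors on the category of $\As$-schemes, mirroring the standard base-change argument for the usual infinite Grassmannian in \cite{AMP}. I would construct a natural transformation
$$\Grc(V)\times_{\As} S \longrightarrow \Grc(V_S)$$
by unwinding the definition of the fiber product. For any $S$-scheme $g\colon S'\to S$, an $S'$-point of $\Grc(V)\times_{\As} S$ is the data of an $S'$-point of $\Grc(V)$ whose structure morphism $S'\to \As$ factors as $S'\xrightarrow{g} S\to \As$. By definition this is a quasi-coherent sub-$\Oc_{S'}$-module $W\subseteq V\widehat\otimes_{\Oc_{\As}}\Oc_{S'}$ satisfying the flatness and commensurability conditions. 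Since the structure map factors through $S$, there is a canonical identification $V\widehat\otimes_{\Oc_{\As}}\Oc_{S'}\iso V_S\widehat\otimes_{\Oc_S}\Oc_{S'}$, so $W$ is naturally a sub-$\Oc_{S'}$-module of $V_S\widehat\otimes_{\Oc_S}\Oc_{S'}$, i.e. an $S'$-point of $\Grc(V_S)$.

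For the inverse transformation, an $S'$-point of $\Grc(V_S)$ is a sub-$\Oc_{S'}$-module of $V_S\widehat\otimes_{\Oc_S}\Oc_{S'}$, which via the same canonical identification is a sub-$\Oc_{S'}$-module of $V\widehat\otimes_{\Oc_{\As}}\Oc_{S'}$; it comes equipped with the composition $S'\to S\to \As$, hence gives a point of the fiber product. Thus the two constructions are mutually inverse natural transformations, provided we show the defining conditions on $W$ (flatness of the quotient, existence of a local $C$ commensurable with $V^+$ making $W+C=V$ and $W\cap C$ locally free of finite type) are compatible with the identification.

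Third, I would verify this compatibility by working locally. Given $f\colon S\to \As$, take an open $U\subseteq \As$ on which $V$ admits the trivialization $(V_U,V_U^+,(V_1)_U)\iso (\Oc_U((z))[T]/p^U,\Oc_U[[z]][T]/p^U,z\Oc_U[[z]][T]/p^U)$. Then on $f^{-1}(U)$, base change yields
$$V_{S}\iso \Oc_{f^{-1}(U)}((z))[T]/(f^*p^U)$$
and similarly for $V_S^+$ and $(V_1)_S$, matching the local model used to define $\Grc(V_S)$. The commensurability subspaces $C$ can be chosen locally of the form $z^{-N}V^+$, whose formation commutes with base change; flatness of the quotient descends as well because the condition is local on $S'$ and preserved under pullback. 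The second assertion is then a direct specialization: for a rational point $p(T)\colon \Spec k\to \As$, one has $V\widehat\otimes_{\Oc_{\As}}k\iso k((z))[T]/p(T)$, and so $\Grc(V)\times_{\As}\Spec k \iso \Grc(k((z))[T]/p(T))$.

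The main obstacle I anticipate is the careful handling of the completed tensor product $\widehat\otimes_{\Oc_{\As}}$ in the definition of $V_S$ and the verification that completion (with respect to the $V_1$-adic topology on $V^+$) genuinely commutes with arbitrary base change $\Oc_{\As}\to \Oc_S$ in this context. This works because $V^+$ is locally of the tractable form $\Oc_U[[z]][T]/p^U$ with $p^U$ monic, so the completion is finite free over $\Oc_U[[z]]$, and tensoring a finite free module commutes with all limits; this reduces the issue to the well-known base-change behavior of the ordinary Sato Grassmannian, which is exactly the content of the analogous statement in \cite{AMP}.
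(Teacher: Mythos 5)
Your argument is correct and is essentially the intended one: the paper omits the proof of this theorem, stating only that the construction parallels the standard infinite Grassmannian of \cite{AMP} (with details in \cite{P}), and your base-change argument --- identifying $S'$-points of the fiber product with $S'$-points of $\Grc(V_S)$ via the transitivity $V\widehat\otimes_{\Oc_{\As}}\Oc_{S'}\iso V_S\widehat\otimes_{\Oc_S}\Oc_{S'}$, checking the flatness and commensurability conditions locally, and using that $V^+$ is locally finite free over $\Oc_U[[z]]$ because $p^{univ}$ is monic --- is exactly the relative version of that argument.
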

This means that: 
$$\Grc(\Oc_{\As}((z))[T]/p^{univ}(T))\to \As$$
is a fibration of infinite Grassmannians.

%Consider the following diagram:
%$$\xymatrix{
%\Grc(V)\times_{\As,f} S \ar[r] \ar[d] & \Grc(V) \ar[d]\\
%S \ar@/^/@{-->}[u]  \ar@{-->}[ur] \ar[r]^{f} & \As
%}$$
%We have:
%$$\Grc(V)(S)=\displaystyle{\coprod_{f\in \As(S)}}\Gamma(\Grc(V)\times_{\As,f}S / S)$$

\begin{remark}\label{higgs:r:Grc&Gr}
Note that one has an isomorphism of $\Oc_{\As}$-modules
$$\Oc_{\As}((z))[T]/p^{univ}(T)\iso \Oc_{\As}((z))^n\,,$$
therefore, considering the trivial fibration of $k$-schemes
$$\Gr (k((z))^n)\times \As \to \As$$
one can get an isomorphism:
\begin{equation}\label{higgs:e:Grc&Gr}
\Gr (k((z))^n)\times \As\iso \Grc(\Oc_{\As}((z))^n)\iso \Grc(\Oc_{\As}((z))[T]/p^{univ}(T))
\end{equation}
\end{remark}

\subsection{Krichever map via the spectral construction.}\quad

Given a point $(E,\phi,\varphi)\in \Higgs_X^{\infty}(S)$ and using the formal trivialization $\phi_{\varphi}$ of  Proposition \ref{higgs:p:Higgs_espectral}, one gets a well-defined Krichever map:
\begin{align*}
 \Higgs_X^{\infty}(S) & \to \Grc(\Oc_{\As}((z))[T]/p^{univ}(T))(S)\\
(E,\phi,\varphi) & \mapsto \phi_{\varphi}(\varinjlim_m \pi'_{\ast}E_{\varphi}(m))
\end{align*}
where $\pi' \colon X_{\varphi}\to S$. For rational points is given by:
\begin{align*}
\Higgs_X^{\infty}(k) & \to \Grc(\Oc_{\As}((z))[T]/p^{univ}(T))(k)\\
(E,\phi,\varphi) & \mapsto \phi_{\varphi}\Hcoh^0(X_{\varphi}-\pi^{-1}(x),E_{\varphi})
\end{align*}
$\pi\colon X_{\varphi} \to X$ being the spectral cover.

Bearing in mind the isomorphism of the equation (\ref{higgs:e:Grc&Gr}), 
%$$\Gr (k((z))^n)\times \As\iso \Grc(\Oc_{\As}((z))[T]/p^{univ}(T))$$
it follows that this Krichever map is in fact equivalent to the Krichever map defined in \ref{higgs:d:Kr}, and by theorem  \ref{higgs:t:KrIny} is injective. The image is characterized in a similar way to that of theorem \ref{higgs:t:carHiggs}.

See also \cite{DM} for other interpretation of the spectral construction.

\section{Equations of the moduli space of Higgs Pairs.}\qquad

Let $p(T)\in \As(k)$ be a rational point and denote
$$V_p=k((z))[T]/p(T) \qquad \quad V_p^+=k[[z]][T]/p(T)\,.$$
%We call formal base curve, associated to the couple $(V_p,V_p^+)$, to
%$$\widehat X:=\Spf k[[z]]$$
%and formal spectral curve, associated to $(V_p,V_p^+)$, to:
%$$\widehat X_V:=\Spf V_p^+\,.$$
From now on, assume that  $V_p$ is a separable $k((z))$-algebra with the following decomposition:
$$V_p\iso V_1 \times \cdots \times V_r$$
where $V_i=k((z))[T_i]/T_i^{n_i}-zu_i(T_i)$, $n_1+ \cdots + n_r=n$ and $u_i(T_i)$ are invertibles in $k[[T_i]]$.
In addition, it is easy to prove the following isomorphism:
\begin{equation}\label{eq:e:descom}
 k((T_i))\iso V_i=k((z))[T_i]/T_i^n-zu(T_i)
\end{equation}
where $z$ is thought as a serie in $T_i$ by the expresion $z=T_i^{n_i}u_i(T_i)^{-1}$.

This assumption is motivated by the following geometric fact:
\begin{prop}\label{higgs:p:X_V}
Consider $(E,\varphi,\phi)\in \Higgs_X^{\infty}(k)$ and let $p_{\widehat \varphi}(T)\in \As(k)$ be characteristic polynomial of the induced formal Higgs field. Then, the ring of the formal spectral curve, $\widehat X_V:=\Spf V_p^+$, is isomorphic, as $k[[z]]$-algebra, to:
$$k[[z]][T_1]/T_1^{n_1}-zu_1(T_1) \times \cdots \times k[[z]][T_r]/T_r^{n_r}-zu_r(T_r)\,,$$
where $n_1+\cdots +n_r=n$ and $u_i(T_i)\in k[[T_i]]^{\ast}$.
\end{prop}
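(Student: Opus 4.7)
The plan is to decompose $V_p^+ = k[[z]][T]/p_{\widehat\varphi}(T)$ as a finite product of complete local $k[[z]]$-algebras via Hensel's lemma, and then to identify each factor with a ring of the prescribed form using the separability hypothesis and the Puiseux-type presentation of the $V_i$.

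First, since $k$ is algebraically closed, the reduction $\bar p(T) := p_{\widehat\varphi}(T) \bmod z \in k[T]$ splits as $\bar p(T) = \prod_{i=1}^{r}(T - c_i)^{n_i}$ with pairwise distinct $c_i \in k$ and $\sum_i n_i = n$. The factors $(T-c_i)^{n_i}$ are pairwise coprime in $k[T]$, so the henselian property of the complete DVR $k[[z]]$ lifts the factorisation to $p_{\widehat\varphi}(T) = \prod_i p_i(T)$ in $k[[z]][T]$, with each $p_i$ monic of degree $n_i$ and $p_i \equiv (T-c_i)^{n_i} \pmod{z}$. The Chinese Remainder Theorem then yields
$$V_p^+ \;\cong\; \prod_{i=1}^{r} A_i, \qquad A_i := k[[z]][T]/p_i(T),$$
each $A_i$ being a complete local $k[[z]]$-algebra, free of rank $n_i$, with maximal ideal $(z,T-c_i)$ and residue field $k$.

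Next I would identify each $A_i$ with a factor of the required form. Inverting $z$ and invoking the separability hypothesis, $A_i[z^{-1}] = V_i$ is one of the field factors in $V_p = V_1\times\cdots\times V_r$. By hypothesis $V_i \cong k((z))[T_i]/(T_i^{n_i} - z u_i(T_i))$ with $u_i(T_i)\in k[[T_i]]^{\ast}$, and (\ref{eq:e:descom}) identifies $V_i$ with $k((T_i))$, the element $z$ corresponding to $T_i^{n_i} u_i(T_i)^{-1}$. In this presentation $T_i$ is a uniformiser of the complete DVR $k[[T_i]] \subset V_i$, and the relation $T_i^{n_i} - z u_i(T_i) = 0$ exhibits $k[[T_i]]$ as $k[[z]][T_i]/(T_i^{n_i}-zu_i(T_i))$, free over $k[[z]]$ of rank $n_i$ with basis $1, T_i, \ldots, T_i^{n_i-1}$. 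Pairing the two decompositions yields the claimed isomorphism $V_p^+ \cong \prod_i k[[z]][T_i]/(T_i^{n_i} - z u_i(T_i))$.

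The main obstacle I anticipate is the verification that the local factor $A_i$ coming from the Hensel--CRT splitting coincides with the complete DVR $k[[T_i]]$ inside $V_i$ (rather than a strictly smaller order). This hinges on the compatibility of the characteristic polynomial $p_{\widehat\varphi}$ with the Puiseux uniformisers $T_i$ and on the separability of $V_p$: once normality of $A_i$ is established, the two sides share the same $k[[z]]$-rank, the same residue field, the same generic fibre, and both are $z$-adically complete, so the inclusion $A_i\hookrightarrow k[[T_i]]$ is forced to be an equality by comparing ranks, and assembling over $i$ delivers the proposition.
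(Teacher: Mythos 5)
Your opening step (Hensel's lemma to factor $p_{\widehat\varphi}$ over the henselian ring $k[[z]]$, then CRT to split $V_p^+\cong\prod_i A_i$ into complete local factors) is fine and is essentially the same move the paper makes by taking the primary decomposition of $\mathfrak m_x\Oc_y$ and completing at each maximal ideal. The problems are in the second half. First, the argument is circular: you invoke ``by hypothesis $V_i\cong k((z))[T_i]/(T_i^{n_i}-zu_i(T_i))$ with $u_i\in k[[T_i]]^{\ast}$,'' but that decomposition is the standing assumption which the paper explicitly introduces \emph{before} the proposition and which the proposition is stated to justify (``This assumption is motivated by the following geometric fact''). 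The paper's own proof does not use it; it derives the presentation from scratch. You cannot assume the Puiseux-type form of the $V_i$ when the point of the proposition is to produce it.

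Second, the step you yourself flag as the main obstacle --- that $A_i$ coincides with the complete DVR $k[[T_i]]$ rather than a proper order --- is the actual mathematical content, and your proposed resolution does not close it. Equality of $k[[z]]$-ranks, residue fields, generic fibres, and $z$-adic completeness do \emph{not} force a finite birational inclusion of complete local rings to be an equality: $k[[s^2,s^3]]\subsetneq k[[s]]$ over $k[[z]]=k[[s^2]]$ (i.e.\ $A=k[[z]][T]/(T^2-z^3)$ inside its normalization) satisfies all four and is strict; note that $k((z))[T]/(T^2-z^3)$ is even a separable field extension, so separability of $V_p$ alone does not rule this out. Everything therefore hinges on normality (equivalently regularity) of $A_i$, which you defer with ``once normality of $A_i$ is established'' and never supply. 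This is precisely where the paper injects the geometric input: it asserts that $\widehat\Oc_y=V_p^+$, being the completion of the spectral curve along the fibre over $x$, is \emph{regular}, applies Cohen's structure theorem to get $\widehat\Oc_{y_i}\iso k[[T_i]]$ with $z=T_i^{n_i}v_i(T_i)$ for a unit $v_i$, and only then uses integrality of $T_i$ over $k[[z]]$ together with the structure of the special fibre (all $n_i$ points collapsing at $z=0$ forces the lower coefficients of the minimal polynomial to be divisible by $z$) to arrive at $k[[T_i]]\iso k[[z]][T_i]/(T_i^{n_i}-zu_i(T_i))$ with $u_i=v_i^{-1}$. Your proof needs an argument of this kind --- some use of the fact that the data comes from a point of $\Higgs_X^{\infty}(k)$ and hence from an actual (smooth) spectral curve --- and as written it has none.
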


\begin{proof}
Indeed, denote $\widehat \Oc_x=k[[z]]$ and $\widehat \Oc_y=V_p^+=k[[z]][T]/p_{\widehat \varphi}(T)$. $\widehat \Oc_x$ is a local, regular and complete ring with maximal ideal $\mathfrak m_x=(z)$ and $\widehat \Oc_y$ is a regular, complete, separable $\widehat \Oc_x$-algebra of dimension one, thus, $\widehat \Oc_y$ is a Dedekind domain and we have a primary decomposition:
$$\mathfrak m_x \Oc_y=\mathfrak m_1^{n_1}\cdots \mathfrak m_r^{n_r}$$
where $\mathfrak m_i$ are the maximal ideals of the points:
$$\{y_1,\cdots ,y_r\}=\Spec \Oc_y\,.$$
Moreover, if we denote $\widehat \Oc_{y_i}$ the completion of $(\Oc_y)_{y_i}$, in $\widehat \Oc_{y_i}$ one has $\mathfrak m_x\Oc_{y_i}=\mathfrak m_i^{n_i}$. Since $\widehat \Oc_{y_i}$ are local, regular and complete $k$-algebras of dimension one, by Cohen theorem it follows that $\widehat \Oc_{y_i}\iso k[[T_i]]$ and then, in $\widehat \Oc_{y_i}$, it is:
$$z=T_i^{n_i}v_i(T_i)$$
$v_i(T_i)$ being an invertible element in $\widehat \Oc_{y_i}$. 
In addition, we know that $k[[z]] \to k[[T_i]]$ is an integer morphism, so one gets a relation:
$$T_i^{n_i}+b_1^{(i)}T_i^{n_i-1}+\cdots +b_{n_i}^{(i)}=0$$
where $b_j^{(i)}\in k[[z]]$. That is:
$$k[[T_i]]\iso k[[z]][T_i]/T_i^{n_i}+b_1^{(i)}T_i^{n_i-1}+\cdots +b_{n_i}^{(i)}$$
Moreover, if we restrict to $z=0$ we should get a point counted $n_i$ times, what means that $b_j^{(i)}=z\bar b_j^{(i)}$ where $\bar b_j^{(i)} \in k[[z]]$. Therefore:
$$T_i^{n_i}=-z(\bar b_1^{(i)}T_i^{n_i-1}+\cdots +\bar b_{n_i}^{(i)})\,.$$ 
If we denote $u_i(T_i)$ to $v_i(T_i)^{-1}$, one gets that $u_i(T_i)$ is a polynomial in $T_i$ with coefficients in $k[[z]]$ and:
$$k[[T_i]]\iso k[[z]][T_i]/T_i^{n_i}-zu_i(T_i)$$

Because of separabily and the fact that we are working in characteristic zero, if $u_i=v_j$ the $n_i\neq n_j$. Now we can conclude:
$$k[[z]][T]/p_{\widehat \varphi}(T)\iso k[[z]][T_1]/T_1^{n_1}-zu_1(T_1) \times \cdots \times k[[z]][T_r]/T_r^{n_r}-zu_r(T_r)$$
where $u_i(T_i)=v_i(T_i)^{-1}$.
\end{proof}

\begin{remark}
 Note that, since $k$ has characteristic zero, there exists $\bar v_i \in k[[T_i]]$ such that $v_i=\bar v_i^{n_i}$. Then, $\bar T_i=T_i\bar v_i  \in k[[T_i]]$ is a formal parameter for $y_i$ and $z=\bar T_i^{n_i}$, that is:
$$\widehat \Oc_{y_i}\iso \widehat \Oc_x[\bar T_i]/\bar T_i^{n_i}-z\,.$$
In our case we are not allow to do this change of parameter, because once we take a point in $\Higgs_X^{\infty}$ we get a polynomial in $\As$, and if we were allowed to do this change, then the polynomial would be different.
\end{remark}

In order to give the equations describing $\Higgs_X^{\infty}$, we will firstly need to adapt some results concerning Tau and Baker-Akhiezer functions given in \cite{AMP,MP1,MP4}.

\subsection{Formal Jacobian and Abel morphism.}\qquad

Because of the isomorphisms of the equation \ref{eq:e:descom} and following \cite{AMP, MP4} it can be proven that functor $\underline V_p^*$ of invertible elements in $V_p$ is representable by a formal group $k$-scheme whose connected component of the origin decomposes as:
$$\Gamma_{V_p}=\Gamma_{V_p}^- \times \mathbb G_m^r \times \Gamma_{V_p}^+$$
and 
$$\Gamma_{V_p}^{\pm}\iso \Gamma_{V_1}^{\pm} \times \cdots \times \Gamma_{V_r}^{\pm}\,.$$

Moreover, the formal Jacobian of the formal spectral curve $\widehat X_V$, $\Jc(\widehat X_V)$, is isomorphic to $\Gamma_{V_p}^-$ (see \cite[Theorem 4.14]{AMP} and also \cite{MP4}).

Let $\widehat \As_{\infty}$ be the formal group scheme:
$$\widehat \As_{\infty}:=\varinjlim_n \Spec k[[t_1,\dots ,t_n]]$$
endowed with the additive group law. Let us denote $k\{\{t_1,t_2,\dots \}\}$ for the ring $\Oc_{\widehat \As_{\infty}}=\varprojlim_n k[[t_1,\dots ,t_n]]$ and set:
$$\As_{\infty}^r:=\widehat \As_{\infty} \times \overset{r}{\dots} \times \widehat \As_{\infty}\,.$$

Since the characteristic of $k$ is zero, the exponetial map is the isomorphism of formal group $k$-schemes (see \cite[Definition 4.11]{AMP}):
\begin{align*}
\expo \colon \widehat \As_{\infty}^r & \to  \Jc(\widehat X_V)\\
(\{a_i^{(1)}\}_{i>0},\dots ,\{a_i^{(r)}\}_{i>0}) & \mapsto \big( \expo(\sum_{i>0}\frac{a_i^{(1)}}{T_1^i}), \dots ,\expo(\sum_{i>0}\frac{a_i^{(r)}}{T_r^i})\big)
\end{align*}

Now we can identify $\Jc(\widehat X_V)$ with the formal spectrum of the ring:
$$k\{\{t_1^{(1)}, \dots \}\}\hat \otimes \cdots \hat \otimes k\{\{t_1^{(r)}, \dots \}\}$$
and its universal element will be denoted by:
$$v=\prod_{i=1}^r \expo(\sum_{j\geq 1}\frac{t_j^{(i)}}{T_i^j})\,,$$

The Abel morphism of degree one (see \cite{MP4}):
$$\phi_1 \colon \widehat X_V \to \Jc(\widehat X_V)$$ 
is defined by the $r$ series: 
$$\big( (1-\frac{\bar T_1}{T_1})^{-1},\dots ,(1-\frac{\bar T_r}{T_r})^{-1}\big)$$
where we distinguish the indeterminates:
$$\bar T_i \in \Oc_{\widehat X_V}=k[[\bar T_1]] \times \cdots \times k[[\bar T_r]]\iso V_p^+$$
and $T_i\in \Oc_{\Jc(\widehat X_V)}$.

Equivalentely, the Abel morphism is the induced morphism by the next ring homomorphism:
\begin{align*}
k\{\{t_1^{(1)},t_2^{(1)}, \dots \}\}\hat \otimes \cdots \hat \otimes k\{\{t_1^{(r)},t_2^{(r)}, \dots \}\} &\to k[[\bar T_1]] \times \cdots \times k[[\bar T_r]]\\
t_i^{(j)}& \mapsto \bar T_j^{(i)}
\end{align*}

\subsection{Tau and Baker-Akhiezer functions .}\qquad

Consider the fibration of infinite Grassmannians:
$$\Gr(k((z))^n)\times \As \to \As$$
and let $p(T)$ be a rational point of $\As$. The fiber at $p(T)\in \As(k)$ is $\Gr(V_p)(k)$ (see \ref{higgs:ss:Grc}), where $V_p=k((z))[T]/p(T)$.

Consider now the action of $\Gamma_{V_p}$ in $\Gr(V_p)$ by homoteties:
$$\mu \colon \Gamma_{V_p}\times \Gr(V_p) \to \Gr(V_p)$$
and define the Poincar\'e sheaf on $\Gamma_{V_p}\times \Gr(V_p)$ by:
$$\Pc:=\mu^{\ast}\Det_{V_p}^*\,.$$
\begin{defin}
For each rational point $W\in \Gr(V_p)$, we define the \emph{sheaf of $\tau$ functions} of $W$ as the line bundle on $\Gamma_{V_p} \times \{W\}$ given by:
$$\widetilde \Lca_{\tau}(W):=\Pc_{|\Gamma_{V_p} \times \{W\}}$$
We call $\tau$-section of a point $W$, $\tilde \tau_W$, the image of $\mu^{\ast}\Omega_+$ under the morphism
$$\Hcoh^0(\Gamma_{V_p} \times \Gr(V_p),\Pc) \to \Hcoh^0(\Gamma_{V_p} \times \{W\},\widetilde \Lca_{\tau}(W))$$
where $\Omega_+$ is the canonical global section of $\Det_{V_p}^{\ast}$ defined in \cite{AMP}.
\end{defin}
Note that  $\tilde \tau_W$ is not a true function over $\Gamma_{V_p} \times \{W\}$, because $\widetilde \Lca_{\tau}(W)$ is in general non trivial. The algebraic analogous of the tau  function defined by Sato, Segal and Wilson (\cite{SW}) is obtained by restricting $\widetilde \Lca_{\tau}(W)$ to the formal subgroup $\Gamma_{V_p}^- \iso \Jc(\widehat X_V) \subset \Gamma_{V_p}$. Explicitily:

Define $\Lca_{\tau}(W):=\widetilde \Lca_{\tau}(W)_{|\Gamma_{V_p}^- \times \{W\}}$. Like in \cite{AMP}, it can be shown that this vector bundle is trivial over $\Gamma_{V_p}^- \times \{W\}$. In order to obtain a trivialization (that allows us to identify sections with functions) one needs to find out a section without zeros. Let $\Vs$ be the fiber bundle associated to $\Lca_{\tau}(W)$, fix a non-zero element $\rho_W$ in the fiber of $\Vs$ over the identity point $(1,W)\in \Gamma_{V_p}^- \times \{W\}$ and let $\sigma_0$ be the unique $\Gamma_{V_p}^-$-equivariant section such that $\sigma_0(1,W)=\rho_W$. 

Thus, $\sigma_0$ is a constant section without zeros and the global section defined by $\tilde \tau_W$ is identified, via the trivialization given by $\sigma_0$, with the function:
$$\tau_W \in \Oc_{\Gamma_{V_p}^-}\iso \Oc_{\Jc(\widehat X_V)}\iso k\{\{t_1^{(1)}, \dots \}\}\hat \otimes \cdots \hat \otimes k\{\{t_1^{(r)}, \dots \}\}$$
defined by:
$$\tau_W(g_{\bullet})=\frac{\tilde \tau_W(g_{\bullet})}{\sigma_0(g_{\bullet})}=\frac{\mu^{\ast}\Omega_+(g_{\bullet})}{\sigma_0(g_{\bullet})}=\frac{\Omega_+(g_{\bullet}\cdot W)}{g_{\bullet} \cdot \rho_W}$$
for $g_{\bullet}=\prod_{i=1}^r \expo(\sum_{j\geq 1}\frac{t_j^{(i)}}{T_i^j})\in \Jc(\widehat X_V)\iso \Gamma_{V_p}^- \,.$

\begin{remark}
 Over the formal base curve $\widehat X=\Spf k[[z]]$, if $\Omega \in \Gr(k((z)))$, its $\tau$ function over $\Jc(\widehat X)\iso \Gamma^- \iso \Spf k\{\{t_1, \dots \}\}$ is, in characteristic zero (\cite{AMP}):
$$\tau_{\Omega}(g)=\frac{\tilde \tau_{\Omega}(g)}{\sigma_0(g)}=\frac{\mu^{\ast}\Omega_+(g)}{\sigma_0(g)}=\frac{\Omega_+(gW)}{g \rho_W}$$
for $g=\expo(\sum_{j\geq 1}\frac{t_j}{z^j})\in \Jc(\widehat X)\iso \Gamma^- \,.$
\end{remark}

Now is time to go for Baker-Akhiezer functions.

Consider the composition:
$$\tilde \beta \colon \widehat X_V \times \Gamma_{V_p} \times \Gr(V_p) \stackrel{\phi_1 \times \Ide}{\rightarrow} \Gamma_{V_p} \times \Gamma_{V_p} \times \Gr(V_p) \stackrel{m\times \Ide}{\rightarrow} \Gamma_{V_p}\times \Gr(V_p)$$
where $\phi_1 \colon \widehat X_V \to \Gamma_{V_p}$ is the Abel morphism of degree one (with values in $\Gamma_{V_p}^- \iso \Jc(\widehat X_V) \subset \Gamma_{V_p}$) and $m$ is the group law in $\Gamma_{V_p}$.

\begin{defin}
The sheaf of Baker-Akhiezer functions (BA) is the line bundle over $\widehat X_V \times \Gamma_{V_p} \times \Gr(V_p)$ defined by:
$$\widetilde \Lca_{BA}:=\tilde \beta^{\ast} \Pc \,.$$
The sheaf of BA functions of a point $W\in \Gr(V_p)$ is:
$$\widetilde \Lca_{BA}(W):=\widetilde{{\Lca}_{BA}}_{|\widehat X_V \times \Gamma_{V_p} \times \{W\}}\,.$$
Similarly:
$$\widetilde \Lca_{BA}(W):=\tilde \beta_W^{\ast} \tilde \Lca_{\tau}(W)$$
$\tilde \beta_W$ being the restriction of $\tilde \beta$ to $W\in \Gr(V_p)$.
\end{defin}

\begin{defin}
 The BA-section of $W\in \Gr(V_p)$ is $\tilde \psi_W:=v^{-1}\cdot \beta_W^{\ast}(\tilde \tau_W)$, where:
$$\beta_W^{\ast} \colon \Hcoh^0(\Gamma_{V_p}\times \{W\},\widetilde \Lca_{\tau}(W))\to \Hcoh^0(\widetilde X_V \times \Gamma_{V_p}\times \{W\},\widetilde \Lca_{BA}(W))$$
is the induced morphism by $\tilde \beta_W^{\ast}$. That is, choose it in such a way that:
$$\widetilde {\psi_W}_{|x_0^{(i)} \times \Gamma_{V_p}}=v_i^{-1}$$
where $x_0^{(i)}$ is the origin of $\widehat X_{V_i}$ and $v^{-1}$ is $v_i^{-1}$ in the component $\widehat X_{V_i}$.
\end{defin}

As in the previous section, the bundle:
$$\Lca_{BA}(W):=\widetilde \Lca_{BA}(W)_{|\widetilde X_V \times \Gamma_{V_p}^- \times \{W\}}$$
is trivial over $\widetilde X_V \times \Gamma_{V_p}^-$, so, choosing a trivialization, the BA-function of a point $W\in \Gr(V_p)$ is defined by the following formula:
$$\psi_W(T_{\bullet},g_{\bullet})=v^{-1}\frac{\tau_W(g_{\bullet}\cdot \phi_1(T_{\bullet}))}{\tau_W(g_{\bullet})}$$
where $T_{\bullet}=(T_1,\dots ,T_r)$ and $g_{\bullet} \in \Gamma_{V_p}^-$.

In order to give an explicit expression of this function in characteristic zero, we compose the Abel morphism with the exponential isomorphism:
$$\widehat X_V \stackrel{\phi_1}{\rightarrow} \Jc(\widehat X_V) \stackrel{exp^{-1}}{\rightarrow} \widehat \As_{\infty}^r$$
that sends $T_j$ to the point in $\widehat \As_{\infty}^r$ with coordinates:
$$[T_j]:=\big((0,\dots ),\dots ,(T_j,\frac{T_j^2}{2},\frac{T_j^3}{3},\dots ),\dots ,(0,\dots )\big)$$
or, equivalentely, this map is induced by the ring hommorphism:
\begin{align*}
 k\{\{t_1^{(1)}, \dots \}\}\hat \otimes \cdots \hat \otimes k\{\{t_1^{(r)}, \dots \}\} &\to k[[T_1]] \times \cdots \times k[[T_r]]\\
t_i^{(j)}& \mapsto (0,\dots ,0,\frac{T_j^{(i)}}{i},0,\dots ,0)\,.
\end{align*}
It is clear that we have an addition map:
\begin{align*}
 \widehat X_V \times \Jc(\widehat X_V) & \to \Jc(\widehat X_V) \\
(T_{\bullet},t_{\bullet}) & \mapsto t_{\bullet}+[T_{\bullet}]
\end{align*}
where $T_{\bullet}=(T_1,\dots ,T_r)$, $t_{\bullet}=(t^{(1)},\dots ,t^{(r)})$ y $t_{\bullet}+[T_{\bullet}]$ denotes the point in $\widehat \As_{\infty}^r$ with coordinates $(\dots ,t_i^{(j)}+\frac{T_j^i}{i},\dots )$.

In this fashion, the BA-function of a point $W\in \Gr(V_p)$ (in characteristic zero) is defined by (\cite{MP1,MP4}):
$$\psi_W(T_{\bullet},t_{\bullet})=\prod_{i=1}^r \expo(-\sum_{j\geq 1} \frac{t_j^{(i)}}{T_i^j})\cdot \big( \frac{\tau_W(t_{\bullet}+[T_{\bullet}])}{\tau_W(t_{\bullet})}\big)$$

\begin{remark}\label{eq:r:BAOmega}
 If $\Omega \in \Gr(k((z)))$, its BA-function in characteristic zero is (\cite{AMP,MP1}):
$$\psi_{\Omega}(z,t)=\expo(-\sum_{j\geq 1} \frac{t_j}{z^j})\cdot \big( \frac{\tau_{\Omega}(t+[z])}{\tau_{\Omega}(t)}\big)$$
\end{remark}

\begin{remark}
In the case of points in the Grassmannian of $k((z))$, the Tau-function of a point $\Omega \in \Gr(k((z)))$ generates the subspace $\Omega$(\cite{MP1}), nevertheless, this is no longer true for a point $W\in \Gr(V_p)$. In order to solve this problem the following definition is used in \cite{MP4}.
\end{remark}

Recall the descomposition $V_p\iso V_1 \times \cdots \times V_r$. 
\begin{defin}\label{eq:d:uBA}
 The $u$-th BA-function of a point $W\in \Gr(V_p)$ is the function
{\small$$\psi_{u,W}(T_{\bullet},t_{\bullet}):=\big(\xi_{u1}\expo(-\sum_{j\geq 1} \frac{t_j^{(1)}}{T_1^j}) \frac{\tau_{W_{u1}}(t_{\bullet}+[T_1])}{\tau_W(t_{\bullet})},\dots ,\xi_{ur}\expo(-\sum_{j\geq 1} \frac{t_j^{(n)}}{T_r^j}) \frac{\tau_{W_{ur}}(t_{\bullet}+[T_r])}{\tau_W(t_{\bullet})}\big)$$}
where 
\begin{itemize}
 \item $1\leq u \leq r$.
 \item $W_{uv}:=(1,\dots ,T_u, \dots ,T_v^{-1},\dots ,1)\cdot W$.
 \item $t_{\bullet}+[T_v]:=(t^{(1)},\dots , t^{(v)}+[T_v],\dots ,t^{(n)})$.
 \item $\xi_{ui}$ is $-1$ if $i>u$ and $1$ if $i\leq u$.
\end{itemize}
\end{defin}

Now, let's see that $W$ can be generated (as an infinite subspace of $V_p$) by these $u$-th BA-functions.

Let $\widehat X_V^N$ be the formal scheme:
$$\widehat X_V^N :=\Spf \big((V_1^+)^{\otimes N}\big)\times \cdots \times \Spf \big((V_r^+)^{\otimes N}\big)$$
Denote $(V_i^+)^{\otimes N}=k[[T_i]]^{\otimes N}$ by $k[[x_1^{(i)},\dots ,x_N^{(i)}]]$. We have:
$$\Oc_{\widehat X_V^N}=k[[x_1^{(1)},\dots ,x_N^{(1)},\dots ,x_1^{(r)},\dots ,x_N^{(r)}]]\,.$$
The Abel morphism of degree $N$:
$$\phi_N \colon \widehat X_V^N \to \Jc(\widehat X_V)$$
 is defined by:
$$\prod_{k=1}^N (1-\frac{x_k^{(1)}}{T_1})^{-1},\dots ,\prod_{k=1}^N (1-\frac{x_k^{(r)}}{T_r})^{-1}\,.$$

\begin{lemma}\cite{MP4}
 Let $W\in \Gr^0(V_p)$ be a rational point. Let $N>0$ be an integer number such that $V_p/V_p+T_{\bullet}^N W=(0)$. Let $$\{f_i:=(f_i^{(1)}(T_1), \dots ,f_i^{(r)}(T_r))\,|\, 1\leq i \leq N\cdot r\}$$
be a basis for $V_p^+ \cap T_{\bullet}^N W$ as a $k$-vector space where $f_i^{(j)}\in V_j^+$.

Then:
{\tiny $$\phi_N^{\ast}\tau_W=\prod_{1\leq k<l\leq N,1\leq i\leq r}(x_l^{(i)}-x_k^{(i)})^{-1}\left| \begin{array}{ccccccc}
f_1^{(1)}(x_1^{(1)}) & \cdots & f_1^{(r)}(x_1^{(r)}) & \cdots & f_1^{(1)}(x_N^{(1)}) & \cdots & f_1^{(r)}(x_N^{(r)})\\
\vdots & & \vdots & & \vdots & & \vdots \\
f_{Nr}^{(1)}(x_1^{(1)}) & \cdots & f_{Nr}^{(r)}(x_1^{(r)}) & \cdots & f_{Nr}^{(1)}(x_N^{(1)}) & \cdots & f_{Nr}^{(r)}(x_N^{(r)})\\
\end{array} \right|$$}
as functions of $\Oc_{\widehat X_V^N}$ (up to non-zero constants).
\end{lemma}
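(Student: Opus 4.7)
The plan is to evaluate $\tau_W$ at the specific group element produced by the Abel map via the defining formula $\tau_W(g)=\Omega_+(gW)/(g\cdot\rho_W)$, then identify the resulting expression with the normalised evaluation determinant of the basis $\{f_j\}$. The strategy is essentially the multi-component generalisation of the Segal--Wilson/Sato formula.

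First I would make the Abel map explicit. From the description of $\phi_1$ as the $r$-tuple $(1-\bar T_i/T_i)^{-1}$ and the group law in $\Gamma_{V_p}^-$, the degree-$N$ Abel map $\phi_N$ sends $(x_1^{(\bullet)},\dots,x_N^{(\bullet)})$ to the element $g\in\Gamma_{V_p}^-$ whose $i$-th component is $\prod_{k=1}^{N}(1-x_k^{(i)}/T_i)^{-1}$. Hence $\phi_N^{\ast}\tau_W=\tau_W(g)$, and the task reduces to computing $\Omega_+(gW)$ up to a trivialisation factor independent of the $x_k^{(i)}$ that can be absorbed into the ``non-zero constant'' of the statement.

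Second, I would transport the calculation to a finite-dimensional quotient using $T_{\bullet}^N$. By hypothesis $V_p^+\cap T_{\bullet}^N W$ has basis $\{f_1,\dots,f_{Nr}\}$ and $V_p^+ + T_{\bullet}^N W = V_p$. Because $\Omega_+$ is equivariant (up to a multiplicative scalar) under the action of units in $V_p^+$, I may replace $gW$ by $gT_{\bullet}^{-N}\cdot(T_{\bullet}^N W)$; the rational element $gT_{\bullet}^{-N}$ has component $\prod_k (T_i-x_k^{(i)})^{-1}$, and so acts on a representative $f_j=(f_j^{(1)},\dots,f_j^{(r)})$ of the basis by producing the $r$-tuple of rational functions $f_j^{(i)}(T_i)/\prod_k(T_i-x_k^{(i)})$. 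The determinantal description of $\Omega_+$ recalled from \cite{AMP} then expresses $\Omega_+(gT_{\bullet}^{-N}\cdot T_{\bullet}^N W)$ as the determinant of the $Nr\times Nr$ matrix whose columns are the coordinates of these rational functions in a fixed finite basis of $V_p^+/T_{\bullet}^N V_p^+$.

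Third, I would choose as that fixed basis the residue functionals at the simple poles $\{x_k^{(i)}\}$: for each pair $(k,i)$, the residue of $f_j^{(i)}(T_i)/\prod_{k'}(T_i-x_{k'}^{(i)})$ at $T_i=x_k^{(i)}$ is $f_j^{(i)}(x_k^{(i)})/\prod_{k'\neq k}(x_k^{(i)}-x_{k'}^{(i)})$. The change-of-basis determinant between this residue basis and the standard monomial basis of $V_p^+/T_{\bullet}^N V_p^+$ is, by the classical Cauchy determinant identity applied componentwise in each of the $r$ factors, exactly $\prod_{i}\prod_{1\leq k<l\leq N}(x_l^{(i)}-x_k^{(i)})$ up to an invertible constant. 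Multiplying the two determinants, the diagonal residue factors cancel and one recovers precisely the Vandermonde-normalised determinant in the statement. The main obstacle will be the bookkeeping of the multi-component Cauchy identity and ensuring that all the normalising scalars (coming from $\rho_W$, from the dependence of $\Omega_+$ on the choice of basis of $V_p^+/T_{\bullet}^N V_p^+$, and from the action of $T_{\bullet}^N$) collapse into the single global constant allowed by the statement ``up to non-zero constants''.
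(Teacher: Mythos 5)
The paper does not prove this lemma: it is quoted verbatim from \cite{MP4} (and is the multi-component version of the classical determinantal formula for $\phi_N^{\ast}\tau_W$ in \cite{AMP,MP1}), so there is no in-paper argument to compare against. Your proposal reconstructs the standard proof of that classical formula, componentwise in the $r$ factors $V_i\iso k((T_i))$, and the overall shape is right: write $\phi_N$ explicitly as $g$ with $i$-th component $\prod_k(1-x_k^{(i)}/T_i)^{-1}$, reduce $\Omega_+(gW)$ to an $Nr\times Nr$ determinant built from the basis of $V_p^+\cap T_\bullet^N W$, and convert the monomial-coefficient basis to evaluation/residue functionals at the $x_k^{(i)}$, which is where the Vandermonde normalisation $\prod_{k<l}(x_l^{(i)}-x_k^{(i)})^{-1}$ comes from (note this is the Vandermonde identity, not Cauchy's; the product of the diagonal residue denominators is $\pm V^2$ and the change of basis contributes $\pm V$, giving the net $V^{-1}$).

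Two points deserve tightening before this counts as a proof. First, the finite-dimensional space in which you take coordinates is misidentified: the elements $gT_\bullet^{-N}f_j$, with components $f_j^{(i)}(T_i)/\prod_k(T_i-x_k^{(i)})$, expand as Laurent series in $T_i^{-1}(1-x/T_i)^{-1}=\sum_{m\ge 0}x^mT_i^{-m-1}$ with coefficients in $\Oc_{\widehat X_V^N}$, so the relevant $Nr$-dimensional quotient is the space of polar parts $T_\bullet^{-N}V_p^+/V_p^+$ (equivalently the image in $V_p/V_p^+$), not $V_p^+/T_\bullet^N V_p^+$; the residue pairing you invoke is exactly the pairing between these polar parts and evaluation at the roots of $\prod_k(T_i-x_k^{(i)})$. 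Second, the identity $\tau_W(g)=\Omega_+(gW)/(g\cdot\rho_W)$ has a denominator that varies with $g$, hence with the $x_k^{(i)}$: one must check that the basis of the determinant line implicitly used when writing ``$\Omega_+(gW)$ as a determinant'' differs from the equivariant trivialization $\sigma_0(g)=g\cdot\rho_W$ by a factor independent of the $x_k^{(i)}$, since only then does the discrepancy land in the permitted ``non-zero constant.'' You flag both issues yourself as bookkeeping; they are precisely the content one would have to import from \cite{AMP,MP4} to close the argument, but neither invalidates the strategy.
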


\begin{remark}
 Recall that the superindex $0$ of $\Gr^0(V_p)$ denotes that we are taking the connected component of the Grassmannian where the index function of $W$ takes the value $0$ (see \cite[Def.3.3]{AMP}).
\end{remark}

\begin{thm}\cite{MP4} Take $W\in \Gr^0(V_p)$. Then:
$$\psi_{u,W}(T_{\bullet},t_{\bullet})=(1,\dots ,T_u, \dots ,1)\cdot \sum_{i>0}\big( \psi_{u,W}^{i,1}(T_1),\dots , \psi_{u,W}^{i,r}(T_r)\big)p_{ui,W}(t_{\bullet})$$
where
$$\{\big( \psi_{u,W}^{i,1}(T_1),\dots , \psi_{u,W}^{i,r}(T_r)\big)\,|\,i>0,1\leq u \leq n\}$$
is a basis for $W$ and $p_{ui,W}(t_{\bullet})$ are functions in $t_{\bullet}$.
\end{thm}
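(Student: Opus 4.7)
The plan is to expand $\psi_{u,W}(T_{\bullet},t_{\bullet})$ as a formal power series in the time variables $t_{\bullet}$, read off its coefficients as elements of $V_p$ depending on $T_{\bullet}$, and prove that the resulting collection is a basis of $W$. I would first apply the determinantal formula from the preceding lemma to both $\tau_W$ and each $\tau_{W_{uv}}$, where $W_{uv}=(1,\ldots,T_u,\ldots,T_v^{-1},\ldots,1)\cdot W$. Since $W_{uv}$ is obtained from $W$ by the action of an element of $\Gamma_{V_p}$, its determinantal numerator differs from that of $W$ only by a column-twist by the appropriate power of $T_u$ or $T_v^{-1}$, and consequently the ratio $\tau_{W_{uv}}(t_{\bullet}+[T_v])/\tau_W(t_{\bullet})$ becomes a well-defined formal series on $\widehat X_{V_v}\times\Jc(\widehat X_V)$.

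After multiplication by the exponential prefactor $\xi_{uv}\expo(-\sum t_j^{(v)}/T_v^j)$ and Taylor expansion in $t_{\bullet}$, each coefficient becomes a vector $(\psi_{u,W}^{i,1}(T_1),\ldots,\psi_{u,W}^{i,r}(T_r))\in V_p$, in direct analogy with the single-component formula of Remark \ref{eq:r:BAOmega}. The crucial step is to check that these coefficients actually lie in $W$. I would derive this from the $\Gamma_{V_p}^-$-equivariance of the BA-section together with the construction of $\widetilde{\Lca}_{BA}$ as the pullback $\tilde\beta^{\ast}\Pc$: the twist by $(1,\ldots,T_u,\ldots,1)$ built into Definition \ref{eq:d:uBA} is exactly what converts a section of the universal subsheaf over the Grassmannian into a $W$-valued function, so that successive $t_{\bullet}$-derivatives of $\psi_{u,W}$ at $t_{\bullet}=0$ produce elements of $W$ itself.

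For linear independence and spanning, I would use the finiteness hypothesis $V_p/(V_p^{+}+T_{\bullet}^N W)=0$ from the preceding lemma, which bounds pole orders. Combined with the Wronskian-type factor $\prod_{k<l,i}(x_l^{(i)}-x_k^{(i)})^{-1}$ appearing in the determinantal expression of $\phi_N^{\ast}\tau_W$, this forces the families obtained by varying $u$ and $i$ to realize prescribed pole patterns at the distinct points $y_1,\ldots,y_r$ of the formal spectral cover $\widehat X_V$. A standard Weierstrass gap-sequence argument carried out in each branch $\widehat X_{V_v}$ independently then produces enough linearly independent elements of $W^0$ to exhaust a basis.

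The main obstacle I expect is the identification in the second step: pinning down that the expansion coefficients genuinely belong to $W$ rather than to the ambient $V_p$. This is the multicomponent analogue of the classical Sato--Segal--Wilson identification, and it depends critically on the particular signs $\xi_{ui}$ and twists prescribed in Definition \ref{eq:d:uBA}; without the correct normalization the expansion only produces elements of $V_p$. Once this identification is secured, the remainder of the argument is a direct extension of the single-component result in \cite{MP1}, so I expect no serious difficulty beyond bookkeeping over the $r$ branches of the spectral cover.
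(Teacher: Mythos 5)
First, note that the paper itself offers no proof of this statement: it is imported verbatim from \cite{MP4} (as is the preceding determinantal lemma), so there is no in-paper argument to compare yours against. Judged on its own terms, your outline follows the right general route --- expand $\psi_{u,W}$ in the time variables and use the determinantal expression of $\phi_N^{\ast}\tau_W$ to identify the coefficients --- but it does not close the one step on which the whole theorem rests, and you say so yourself. Appealing to the ``$\Gamma_{V_p}^-$-equivariance of the BA-section together with the construction of $\widetilde\Lca_{BA}$ as $\tilde\beta^{\ast}\Pc$'' cannot by itself show that the Taylor coefficients of $\psi_{u,W}$ land in the subspace $W\subseteq V_p$: equivariance and the pullback construction only locate the section inside a line bundle over $\widehat X_V\times\Gamma_{V_p}^-$, and any choice of signs and twists would be equally equivariant. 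The actual mechanism is concrete and combinatorial: one pulls back $\tau_{W_{uv}}(t_{\bullet}+[T_v])$ by the Abel morphism of degree $N$ and observes that, in the determinant of the preceding lemma, the dependence on the distinguished variable enters through a single column whose entries are the values $f_i^{(v)}$ of a basis $\{f_i\}$ of $V_p^+\cap T_{\bullet}^N W$; Laplace expansion along that column writes the numerator as a $k$-linear combination of the $f_i$ with coefficients that are Schur-type functions of $t_{\bullet}$, and stripping off the prefactor and the twist $T_{\bullet}^N$ exhibits the expansion coefficients as elements of $W$ itself. Without this computation the proposal is a plan, not a proof.

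Second, the argument you offer for spanning and linear independence is misdirected. A point $W\in\Gr^0(V_p)$ is an arbitrary subspace in the Grassmannian sense; no stabilizer algebra is assumed, so there is no algebraic curve on which to run a Weierstrass gap-sequence argument, and the factor $\prod_{k<l,i}(x_l^{(i)}-x_k^{(i)})^{-1}$ has nothing to do with gap sequences --- it is the Vandermonde normalization that makes $\phi_N^{\ast}\tau_W$ symmetric in the variables $x_k^{(i)}$. Spanning follows instead from letting $N$ grow (the subspaces $V_p^+\cap T_{\bullet}^N W$ exhaust $W$ up to the twist) together with the linear independence of the coefficient functions $p_{ui,W}(t_{\bullet})$. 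I would rework both steps along these lines, following the single-component proof in \cite{MP1} that you correctly identify as the model.
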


\begin{remark}\label{higgs:r:v_m}
For studing the index $m$ connected component ($m>0$), $\Gr^m(V_p)$, we need to choose an element $v_m$ such that $\di_k V_p^+/v_mV_p^+ =m$ in order to get an isomorphism $\Gr^m(V_p)\iso \Gr^0(V_p)$ that allow us to define the global  section $\Omega_+^m$ of the dual of the determinant bundle of $\Gr^m(V_p)$ as the image of the canonical section $\Omega_+$ of $\Det_{V_p}^{\ast}$ (see \cite{AMP} and \cite[Remark 4]{MP1}). Let us set $v_m$ as follows:
\begin{itemize}
 \item for $m\leq \frac{1}{2}(r-n)$, let $p,q,s,t$ be integer numbers defined by $-m=q\cdot (n-r)+ p$, $0\leq p<n-r$, $p=s\cdot r+t$, $0\leq t<r$. Therefore, we set
$$v_m:=(z^{-1}\cdot T_{\bullet})^q T_1^{s+1}\cdots T_t^{s+1}T_{t+1}^s \cdots T_r^s$$
 \item for $m> \frac{1}{2}(r-n)$, we set:
$$v_m:=(z^{-1}\cdot T_{\bullet})\cdot v_{r-n-m}^{-1}$$
\end{itemize}
\end{remark}

\begin{thm}\label{eq:t:BAgenW^m}\cite{MP4}
If $W\in \Gr^m(V_p)$ we have:
{\small$$\psi_{u,W}(T_{\bullet},t_{\bullet})=v_m^{-1}(1,\dots ,T_u, \dots ,1)\cdot \sum_{i>0}\big( \psi_{u,W}^{(i,1)}(T_1),\dots , \psi_{u,W}^{(i,r)}(T_r)\big)p_{ui,W}(t_{\bullet})$$}
where
$$\{\big( \psi_{u,W}^{(i,1)}(T_1),\dots , \psi_{u,W}^{(i,r)}(T_t)\big)\,|\,i>0,1\leq u \leq t\}$$
is a basis for  $W$ and $p_{ui,W}(t_{\bullet})$ are functions in $t_{\bullet}$.

In particular, an element of $V_p$ lies in $W$ if and only if it can be expressed as a linear combination of
$$\psi_{1,W}(T_{\bullet},t_{\bullet}),\dots ,\psi_{r,W}(T_{\bullet},t_{\bullet})$$
for certain values of the parameters $t_{\bullet}$.
\end{thm}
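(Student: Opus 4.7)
The plan is to reduce the statement to the index-zero case and then transport the resulting formula to $\Gr^m(V_p)$ via the trivialization of Remark~\ref{higgs:r:v_m}. The map $W \mapsto v_m \cdot W$ is an isomorphism $\Gr^0(V_p) \iso \Gr^m(V_p)$ that is $\Gamma_{V_p}^-$-equivariant and identifies the canonical section $\Omega_+$ with $\Omega_+^m$; hence it identifies the corresponding tau sections. Because the BA-section is constructed through the reciprocal $v^{-1}$ of the universal character, this identification introduces exactly the global factor $v_m^{-1}$ appearing on the right-hand side of the target formula. So it suffices to handle $W \in \Gr^0(V_p)$, which is the theorem quoted just before this one.

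For $W \in \Gr^0(V_p)$, fix $N \gg 0$ with $V_p^+ + T_\bullet^N W = V_p$ and let $\{f_i = (f_i^{(1)},\dots,f_i^{(r)})\}_{1 \leq i \leq Nr}$ be the $k$-basis of $V_p^+ \cap T_\bullet^N W$ provided by the preceding lemma. The essential ingredient in the definition of $\psi_{u,W}$ (Definition~\ref{eq:d:uBA}) is the ratio $\tau_{W_{uv}}(t_{\bullet}+[T_v])/\tau_W(t_{\bullet})$. I would pull the numerator and the denominator back through the Abel morphisms $\phi_{N+1}$ and $\phi_N$ respectively and write each as a Wronskian-type determinant via the lemma. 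Adding $[T_v]$ to $t_\bullet$ corresponds on the Abel side to inserting $T_v$ as an extra coordinate in the $v$-th block of $\widehat X_V^{N+1}$; Laplace-expanding the resulting $(N+1)r$-determinant along this new row/column block and cancelling the Vandermonde prefactors against those of the denominator leaves a finite linear combination of the $f_i^{(v)}(T_v)$ with coefficients $q_{ui,v}(t_\bullet)$ depending only on $t_\bullet$. Multiplying componentwise by $\xi_{uv}\,\expo(-\sum_{j\geq 1} t_j^{(v)}/T_v^j)$ and assembling the $r$ components reproduces the claimed expression, the $(\psi_{u,W}^{(i,1)}(T_1),\dots,\psi_{u,W}^{(i,r)}(T_r))$ being, up to a shift by $T_\bullet^{-N}$, the original basis $\{f_i\}$ of $W$.

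The second half of the statement is then formal. One inclusion is clear, since every $\psi_{u,W}$ is by construction an $\Oc$-linear combination of a basis of $W$. For the converse, the coefficient functions $p_{ui,W}(t_\bullet)$ expand in Schur polynomials in the variables $t_j^{(i)}$, which separate points of $\Gamma_{V_p}^- \cong \J(\widehat X_V)$; consequently any $w \in W$ can be written in the chosen basis and recovered by specializing the parameters $t_\bullet$ in a suitable linear combination of $\psi_{1,W},\dots,\psi_{r,W}$.

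The main obstacle is the combinatorial bookkeeping of signs, shifts and orderings in the Laplace expansion: the signs $\xi_{ui}$ of Definition~\ref{eq:d:uBA}, the placement of the inserted coordinate within the $r$-fold product structure of $\widehat X_V^{N+1}$, and the precise cancellation of Vandermonde and exponential factors must all align. This is the computation carried out in~\cite{MP4}; the present ramified-spectral-cover setting requires only that the basis $\{f_i\}$ respect the decomposition $V_p \cong V_1 \times \cdots \times V_r$ of Proposition~\ref{higgs:p:X_V}, which is automatic.
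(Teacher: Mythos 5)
The paper offers no proof of this theorem: it is imported verbatim from \cite{MP4}, so there is nothing internal to compare against. Your outline is the standard argument of that reference (reduce to $\Gr^0(V_p)$ via the $v_m$-twist of Remark~\ref{higgs:r:v_m}, then expand the determinantal formula $\phi_N^{\ast}\tau_W$ by Laplace after inserting the extra point $[T_v]$), and it is structurally sound, including the correct observation that the numerator involves $\tau_{W_{uv}}$ rather than $\tau_W$. The one point I would correct: the factor $v_m^{-1}$ does not come from the normalization $v^{-1}$ in the definition of the BA-section, but from the change of basis between $W$ and its index-zero companion under the identification $\Omega_+^m \leftrightarrow \Omega_+$ — i.e.\ the determinant in the lemma is computed from a basis of the index-zero translate, and rewriting that basis as ($v_m^{\pm1}$ times) a basis of $W$ itself is what produces the prefactor; likewise the prefactor $(1,\dots,T_u,\dots,1)$ and the signs $\xi_{ui}$ arise from comparing the basis of $V_p^+\cap T_\bullet^N W_{uv}$ with that of $V_p^+\cap T_\bullet^N W$, which is the crux you have filed under ``bookkeeping.''
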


\begin{remark}\label{eq:BAgenOmega^s}
If $\Omega \in \Gr^{s}(k((z)))$ then (\cite{MP1}):
$$\psi_{\Omega}(z,t)=z^{1-s}\sum_{i>0}\psi_{\Omega}^{(i)}(z)p_i(t)$$
where $\{\psi_{\Omega}^{(i)}(z)\}_i$ is a basis for  $\Omega$ and $p_i(t)$ are functions in $t$.
In particular, if $\Omega$ is the point defined by the canonical line bundle $\omega_X$, then $s=g-1$, where $g$ is the genus of $X$. If we denote $\Omega^{-1}$ as the point defined by $\omega_X^{-1}$, then $s=1-3g$.
\end{remark}

\subsubsection{Adjoint Baker function.}\label{eq:ss:BAad}

Since $V_p$ is a finite separable $k((z))$-algebra, it carries the metric of the trace
$$\Tr \colon V_p \times V_p \to k((z))\,,$$
which is non-degenerated. Therefore, $V_p$ can be endowed with the non-degenerated pairing:
\begin{align*}
 \Tg_2 \colon V_p \times V_p & \to k \\
(a,b) & \mapsto \Res_{z=0}(\Tr(a,b))dz
\end{align*}

\begin{lemma}\cite{MP1,MP4}
 The pairing $\Tg_2$ induces an isomorphism of $k$-schemes:
\begin{align*}
 R \colon \Gr(V_p) & \to \Gr(V_p) \\
W &\mapsto W^{\bot} \,,
\end{align*}
where $W^{\bot}$ is writen for the orthogonal of $W$ w.r.t. $\Tg_2$.
\end{lemma}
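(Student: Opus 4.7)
The plan is to establish three things in turn: non-degeneracy of $\Tg_2$, well-definedness of $W \mapsto W^{\bot}$ as a map into $\Gr(V_p)$, and functoriality together with the involution property.

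First I would verify that $\Tg_2$ is non-degenerate. Since $V_p$ is a finite separable $k((z))$-algebra by hypothesis, the trace form $\Tr \colon V_p \otimes_{k((z))} V_p \to k((z))$ is non-degenerate, and the residue pairing $(f,g) \mapsto \Res_{z=0}(fg\, dz)$ is a perfect $k$-bilinear form on $k((z))$; composing gives non-degeneracy of $\Tg_2$. Next I would compute the orthogonal of the reference lattice $V_p^+$. Testing $\Tg_2(a, T^j z^m) = 0$ for all $0\le j<n$ and $m\ge 0$ shows that $(V_p^+)^{\bot}$ is exactly the $k[[z]]$-linear dual of $V_p^+$ under the trace form. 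Because the elements of $V_p^+$ are integral over $k[[z]]$, one has $V_p^+ \subseteq (V_p^+)^{\bot}$, and the quotient $(V_p^+)^{\bot}/V_p^+$ is a torsion $k[[z]]$-module of finite length (bounded by the $z$-adic valuation of the discriminant of $p(T)$). Hence $V_p^+$ and $(V_p^+)^{\bot}$ are commensurable $k$-subspaces of $V_p$.

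With this in hand I would check that $W^{\bot}$ satisfies the two Grassmannian conditions whenever $W$ does. The duality $\Hom_k(V_p/(W^{\bot}+V_p^+),k) \cong W \cap (V_p^+)^{\bot}$ provided by $\Tg_2$, combined with the commensurability of $(V_p^+)^{\bot}$ and $V_p^+$, converts finite-dimensionality of $V_p/(W+V_p^+)$ into finite-dimensionality of $W^{\bot} \cap V_p^+$ (up to a bounded finite-dimensional correction), and symmetrically for the other condition. So $W^{\bot} \in \Gr(V_p)(k)$ for every rational point $W$.

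The extension to $S$-valued points is then carried out by base-changing the pairing to $V_p \widehat\otimes_k \Oc_S$ and defining $W^{\bot}$ as the sheaf-theoretic orthogonal. The same commensurable-lattice duality adapts: flatness of $V_p\widehat\otimes \Oc_S / W^{\bot}$ follows from flatness of $W$ via the perfect pairing, and the local existence of a commensurable $C$ (as in Definition of $\Gr$) transfers to $W^{\bot}$ by taking $C^{\bot}$. Functoriality in $S$ is clear because $\Tg_2$ is defined over $k$. Finally, non-degeneracy of $\Tg_2$ implies $(W^{\bot})^{\bot} = W$, so $R \circ R = \Id$ and $R$ is an isomorphism.

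The step I expect to cause the most trouble is the scheme-theoretic checking of the Grassmannian conditions for $W^{\bot}$ over a general base $S$: for a rational point this is essentially Tate duality, but for $S$-families one must argue carefully that flatness and the existence of a commensurable open lattice are preserved under orthogonality. This reduces to the observation above that $(V_p^+)^{\bot}$ is commensurable with $V_p^+$ and that the pairing $\Tg_2 \otimes \Oc_S$ remains perfect after base change.
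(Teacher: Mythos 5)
The paper offers no proof of this lemma at all: it is imported wholesale from \cite{MP1,MP4}, so there is nothing internal to compare you against. Judged on its own terms, your sketch follows the standard route of those references and is essentially sound: non-degeneracy of $\Tg_2$ from separability of $V_p/k((z))$ plus perfection of the residue pairing; identification of $(V_p^+)^{\bot}$ with the inverse different, hence a lattice commensurable with $V_p^+$; transfer of the two Fredholm conditions by the dualities $(W+C)^{\bot}=W^{\bot}\cap C^{\bot}$ and its counterpart; and correctly locating the real work in the $S$-valued statement (flatness of the quotient by $W^{\bot}$ and the local commensurable lattice), which is exactly the content delegated to the cited papers. One step is stated too glibly: in infinite dimensions, non-degeneracy of $\Tg_2$ alone does not give $(W^{\bot})^{\bot}=W$; you also need $W$ to be closed for the $z$-adic-type topology on $V_p$ so that Tate-style residue duality applies. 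This is automatic for points of $\Gr(V_p)$ --- since $W+V_p^+$ contains the open subspace $V_p^+$ it is open and closed, and $W\cap V_p^+$ is finite-dimensional hence closed, which together force $\overline{W}=W$ --- but it should be said, as it is the reason $R$ is an involution on the Grassmannian and not merely a map. With that remark added, your argument is a faithful reconstruction of the proof the paper chose to cite rather than reproduce.
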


We have the following properties (\cite{MP4}):
\begin{itemize}
 \item $R(\Gr^m(V_p))=\Gr^{-m}(V_p)$.
 \item $R^{\ast}\Det_{V_p}\iso \Det_{V_p}$.
 \item $(g\cdot W)^{\bot}=g^{-1}\cdot W^{\bot}$, for $W\in \Gr(V_p)$ and $g\in \Jc(\widehat X_V)$.
 \item $R^{\ast}\Omega_+^m=\Omega_+^{-m}$.
 \item $\tau_{W^{\bot}}(g)=\tau_{W}(g^{-1})$.
 \item $W_{uv}^{\bot}=W_{vu}$
\end{itemize}

\begin{defin}
The $u$-th Baker-Akhiezer function of a point $W\in \Gr(V_p)$ is:
$$\psi_{u,W}^{\ast}(T_{\bullet},t_{\bullet}):=\psi_{u,W^{\bot}}(T_{\bullet},-t_{\bullet})$$
In characteristic zero:
{\small\begin{align*}
 &\psi_{u,W}^{\ast}(T_{\bullet},t_{\bullet}):=\\
&=\big(\xi_{u1}\expo(\sum_{j\geq 1} \frac{t_j^{(1)}}{T_1^j}) \frac{\tau_{W_{u1}^{\bot}}(-t_{\bullet}+[T_1])}{\tau_W^{\bot}(-t_{\bullet})},\dots ,\xi_{un}\expo(\sum_{j\geq 1} \frac{t_j^{(n)}}{T_n^j}) \frac{\tau_{W_{ur}^{\bot}}(-t_{\bullet}+[T_r])}{\tau_W^{\bot}(-t_{\bullet})}\big)=\\
& =\big(\xi_{u1}\expo(\sum_{j\geq 1} \frac{t_j^{(1)}}{T_1^j}) \frac{\tau_{W_{1u}}(t_{\bullet}-[T_1])}{\tau_W(t_{\bullet})},\dots ,\xi_{ur}\expo(\sum_{j\geq 1} \frac{t_j^{(n)}}{T_r^j}) \frac{\tau_{W_{ru}}(t_{\bullet}-[T_r])}{\tau_W(t_{\bullet})}\big)
\end{align*}}
\end{defin}

\begin{thm}\label{eq:t:adBAgenW^m}
Take $W\in \Gr^m(V_p)$. We have:
{\small$$\psi_{u,W}^{\ast}(T_{\bullet},t_{\bullet})=v_{r-n-m}^{-1}(1,\dots ,T_u, \dots ,1)\cdot \sum_{i>0}\big( \psi_{u,W}^{\ast (i,1)}(T_1),\dots , \psi_{u,W}^{\ast (i,r)}(T_r)\big)p_{ui,W}^{\ast}(t_{\bullet})$$}
where
$$\{\big( \psi_{u,W}^{\ast (i,1)}(T_1),\dots , \psi_{u,W}^{\ast (i,r)}(T_r)\big)\,|\,i>0,1\leq u \leq r\}$$
is a basis for $W^{\bot}$ and $p_{ui,W}^{\ast}(t_{\bullet})$ are functions in $t_{\bullet}$.

In particular, an element of $V_p$ lies in $W^{\bot}$ if and only if it can be expressed as a linear combination of
$$\psi_{1,W}^{\ast}(T_{\bullet},t_{\bullet}),\dots ,\psi_{r,W}^{\ast}(T_{\bullet},t_{\bullet})$$
for certain values of the parameters $t_{\bullet}$.
\end{thm}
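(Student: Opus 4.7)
The statement to be proved is essentially the $R$-dual version of Theorem~\ref{eq:t:BAgenW^m}, so my plan is to obtain it as a formal consequence of that theorem applied to $W^{\bot}$ and the compatibility properties of the involution $R$ collected in Section~\ref{eq:ss:BAad}.

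The starting point is the defining identity $\psi^{\ast}_{u,W}(T_{\bullet},t_{\bullet}) = \psi_{u,W^{\bot}}(T_{\bullet},-t_{\bullet})$. Since $R(\Gr^m(V_p))=\Gr^{-m}(V_p)$, the point $W^{\bot}$ lies in $\Gr^{-m}(V_p)$, so Theorem~\ref{eq:t:BAgenW^m} applies to it and yields an expression
\[
\psi_{u,W^{\bot}}(T_{\bullet},t_{\bullet}) = v^{\,-1}\,(1,\dots,T_u,\dots,1)\cdot\sum_{i>0}\bigl(\psi^{(i,1)}_{u,W^{\bot}}(T_1),\dots,\psi^{(i,r)}_{u,W^{\bot}}(T_r)\bigr)\,p_{ui,W^{\bot}}(t_{\bullet}),
\]
where $\{(\psi^{(i,1)}_{u,W^{\bot}},\dots,\psi^{(i,r)}_{u,W^{\bot}})\}$ is a basis of $W^{\bot}$. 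Relabelling these basis vectors as $\psi^{\ast(i,s)}_{u,W}$ and defining $p^{\ast}_{ui,W}(t_{\bullet}):=p_{ui,W^{\bot}}(-t_{\bullet})$, the substitution $t_{\bullet}\mapsto -t_{\bullet}$ yields exactly the asserted formula, except for the identification of the prefactor $v^{\,-1}$. The ``in particular'' clause then falls out immediately from its analogue in Theorem~\ref{eq:t:BAgenW^m} applied to $W^{\bot}$.

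The real work lies in identifying the correct prefactor. A priori Theorem~\ref{eq:t:BAgenW^m} applied to $W^{\bot}\in\Gr^{-m}(V_p)$ produces $v_{-m}^{-1}$; I have to show this is the same (up to a unit that can be absorbed into the basis) as $v_{r-n-m}^{-1}$. For this I would use the identity
\[
v_m\cdot v_{r-n-m}\,=\,z^{-1}\cdot T_{\bullet}
\]
which follows from the two cases in Remark~\ref{higgs:r:v_m} and from the fact that $v_\ell$ is characterised (up to a unit of $V_p^{+}$) by $\dim_k V_p^{+}/v_\ell V_p^{+}=\ell$. The shift by $r-n$ plays here the role of the genus in Serre duality on the formal spectral curve, and is precisely the ``offset'' of the pairing $\Tg_2$: the element $v_{r-n-m}$ is the one that trivialises the determinant line on $\Gr^{-m}(V_p)$ under the isomorphism $R^{\ast}\Det_{V_p}\cong\Det_{V_p}$ and $R^{\ast}\Omega_+^{m}=\Omega_+^{-m}$.

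The key obstacle is the bookkeeping in the last step: reconciling the naive $v_{-m}^{-1}$ with the stated $v_{r-n-m}^{-1}$. I plan to handle this by chasing the duality through the construction of $\tau$-sections, using $\tau_{W^{\bot}}(g)=\tau_{W}(g^{-1})$, $(g\cdot W)^{\bot}=g^{-1}\cdot W^{\bot}$, and $W^{\bot}_{uv}=W_{vu}$, so that the sign reversals $t\to -t$ and $g\to g^{-1}$ permute the $u$-indices in the expected way and reshuffle the components $\psi^{(i,s)}_{u,W^{\bot}}\leftrightarrow\psi^{\ast(i,s)}_{u,W}$. Once the determinant sections are normalised via $R^{\ast}\Omega_+^{m}=\Omega_+^{-m}$, the factor $v_{r-n-m}^{-1}$ emerges as the natural trivialising element on the dual side, completing the identification.
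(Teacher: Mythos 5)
The paper itself gives no proof of this theorem (it is quoted, like the preceding results, from the references), so your strategy --- deduce it from Theorem \ref{eq:t:BAgenW^m} applied to $W^{\bot}$ via the definition $\psi^{\ast}_{u,W}(T_{\bullet},t_{\bullet})=\psi_{u,W^{\bot}}(T_{\bullet},-t_{\bullet})$, relabelling $p^{\ast}_{ui,W}(t_{\bullet})=p_{ui,W^{\bot}}(-t_{\bullet})$ and using $W_{uv}^{\bot}=W_{vu}$ and $\tau_{W^{\bot}}(g)=\tau_W(g^{-1})$ for the component reshuffling --- is the natural and intended route, and those reductions are fine. You also correctly isolated the only nontrivial point, namely the prefactor. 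But your proposed resolution of that point does not work. You start from the listed property $R(\Gr^m(V_p))=\Gr^{-m}(V_p)$, obtain $v_{-m}^{-1}$, and then try to identify $v_{-m}$ with $v_{r-n-m}$ ``up to a unit'' using $v_m\cdot v_{r-n-m}=z^{-1}T_{\bullet}$ and the claim that $v_{\ell}$ is characterised up to a unit of $V_p^+$ by $\dim_k V_p^+/v_{\ell}V_p^+=\ell$. Both steps fail: $v_{-m}$ and $v_{r-n-m}$ have indices $-m$ and $r-n-m$, which differ by $r-n\neq 0$ precisely in the ramified situation this paper cares about, so they differ by a factor of index $r-n$ (essentially $z^{-1}T_{\bullet}$ up to a unit) and are \emph{not} equal up to a unit; moreover, when $r>1$ the total index does not determine $v_{\ell}$ up to a unit at all --- only the multidegree $(d_1,\dots,d_r)$ with $\sum_id_i=\ell$ does, which is why Remark \ref{higgs:r:v_m} makes a specific choice.

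The correct way to get the stated prefactor is to compute the index of $W^{\bot}$ honestly. Under $\Tg_2(a,b)=\Res_{z=0}(\Tr(ab))dz$ one has, by the residue formula with $z=T_i^{n_i}u_i(T_i)^{-1}$, that $(V_p^+)^{\bot}$ is the codifferent $\prod_iT_i^{1-n_i}k[[T_i]]$, which contains $V_p^+$ with $\dim_k(V_p^+)^{\bot}/V_p^+=n-r$. A standard index computation then gives $W\in\Gr^m(V_p)\Rightarrow W^{\bot}\in\Gr^{r-n-m}(V_p)$, and Theorem \ref{eq:t:BAgenW^m} applied to $W^{\bot}$ produces $v_{r-n-m}^{-1}$ directly, with nothing left to reconcile; the ``in particular'' clause then follows exactly as you say. (The bullet $R(\Gr^m(V_p))=\Gr^{-m}(V_p)$ must be read with this $r-n$ shift --- note that the pairing $m\leftrightarrow r-n-m$ is exactly the symmetry built into the definition of $v_m$ in Remark \ref{higgs:r:v_m} and the one used in Theorem \ref{eq:t:eqHiggs_X}; taking that bullet at face value is what manufactured the discrepancy you were then unable to close.)
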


\subsection{Product of BA-functions.}\label{eq:ss:prodBA}\qquad

Take $\Omega \in \Gr(k((z)))$ and $W\in \Gr(V_p)$. Consider its BA-functions:
$$\psi_{\Omega}(z,t) \qquad \quad \psi_{W}(T_{\bullet},t_{\bullet})$$
(that lies over $\widehat X \times \Gamma^-$ and $\widehat X_V \times \Gamma_{V_p}^-$ respectively).

The graph, $\widehat X_V \to \widehat X_V \times \widehat X$, of the morphism $\widehat \pi \colon \widehat X_V \to \widehat X$ induces:
$$f \colon \widehat X_V \times \Gamma_{V_p}^- \times \Gamma^- \to (\widehat X_V \times \Gamma_{V_p}^-) \times (\widehat X \times \Gamma^-)\,.$$

\begin{remark}
Note that the graph is the morphism induced by:
\begin{align*}
 k[[z]] \otimes_k (k[[T_1]]\times \cdots \times k[[T_r]]) & \to k[[T_1]]\times \cdots \times k[[T_r]] \\
z\otimes_k (T_1,\dots ,T_r) & \mapsto (zT_1,\dots ,zT_r)
\end{align*}
and this expression makes sense because we know how to express $z$ in terms of $T_i$ by equation (\ref{eq:e:descom}).
\end{remark}

\begin{defin}
The product of the BA-functions of $\Omega \in \Gr(k((z)))$ and $W\in \Gr(V_p)$ is defined by:
$$\psi_{W}(T_{\bullet},t_{\bullet}) \ast \psi_{\Omega}(z,t):=f^{\ast}\big( \psi_{W}(T_{\bullet},t_{\bullet}) \otimes_k \psi_{\Omega}(z,t) \big)$$
The product of $\psi_{\Omega}(z,t)$ by the $u$-th BA-function of $W$, $\psi_{u,W}(T_{\bullet},t_{\bullet})$, can be defined in similar way.
\end{defin}

\begin{prop}\label{eq:p:genOmegaW}
Let $W\in \Gr(V_p)$ and $\Omega \in \Gr(k((z)))$ be such that $\Omega \cdot W \in \Gr(V_p)$. Then
$$\{\psi_{\Omega}(z,t)\ast \psi_{1,W}(T_{\bullet},t_{\bullet}),\dots ,\psi_{\Omega}(z,t)\ast \psi_{r,W}(T_{\bullet},t_{\bullet})\}$$
is a generating system for $\Omega \cdot W$. If the characteristic of $k$ is zero, then:
$$\psi_{\Omega}(z,t)\ast \psi_{u,W}(T_{\bullet},t_{\bullet})=\big(\psi_{\Omega}(z,t)\cdot \psi_{u,W}^{(1)}(T_1,t_{\bullet}),\dots ,\psi_{\Omega}(z,t)\cdot \psi_{u,W}^{(r)}(T_r,t_{\bullet})\big)$$
where 
$$\psi_{\Omega}(z,t)=\expo(-\sum_{j\geq 1} \frac{t_j}{z^j})\cdot \big( \frac{\tau_{\Omega}(t+[z])}{\tau_{\Omega}(t)}\big)$$
and
$$\psi_{u,W}^{(i)}=\xi_{ui}\expo(-\sum_{j\geq 1} \frac{t_j^{(i)}}{T_i^j}) \frac{\tau_{W_{ui}}(t_{\bullet}+[T_i])}{\tau_W(t_{\bullet})}$$
\end{prop}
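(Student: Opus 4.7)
The plan is to combine the generating property of the Baker--Akhiezer functions for $\Omega$ and $W$ with the bilinearity of the multiplication map $\Omega\otimes_{k} W \to \Omega\cdot W \subseteq V_p$. By Theorem \ref{eq:t:BAgenW^m}, an element of $V_p$ lies in $W$ exactly when it is a $k$-linear combination of values $\psi_{u,W}(T_\bullet,t_\bullet)$ (for $1\le u\le r$ and suitable specializations of $t_\bullet$), and by Remark \ref{eq:BAgenOmega^s}, an element of $k((z))$ lies in $\Omega$ exactly when it is a $k$-linear combination of specializations $\psi_\Omega(z,t)$.

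Since $\Omega\cdot W$ is by definition the image of $\Omega\otimes_{k} W \to V_p$, every element is a finite sum $\sum_j f_j\cdot g_j$ with $f_j\in\Omega$ and $g_j\in W$. Expanding each $f_j$ and each $g_j$ in specialized BA-functions and using bilinearity, one obtains a $k$-linear combination of products $\psi_\Omega(z,t)\cdot\psi_{u,W}(T_\bullet,t_\bullet)$; identifying this ordinary product in $V_p$ with the operation $\ast$ defined in Section \ref{eq:ss:prodBA} yields that the family $\{\psi_\Omega\ast\psi_{u,W}\}_{u=1}^{r}$ generates $\Omega\cdot W$.

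To establish the explicit formula in characteristic zero, I would unwind the definition $\psi_W\ast\psi_\Omega := f^{\ast}(\psi_W\otimes_{k}\psi_\Omega)$, where $f$ is induced by the graph of $\widehat\pi\colon\widehat X_V\to\widehat X$. By Proposition \ref{higgs:p:X_V} the decomposition $\Oc_{\widehat X_V}\cong\prod_{i=1}^{r} k[[T_i]]$ splits the graph into $r$ independent ring maps, each given on the $i$-th factor by the inclusion $k[[z]]\hookrightarrow k[[T_i]]$ with $z=T_i^{n_i}u_i(T_i)^{-1}$ from equation (\ref{eq:e:descom}). Hence $f^{\ast}$ of the external tensor acts componentwise: on the $i$-th factor it multiplies $\psi_\Omega(z,t)$ (with $z$ viewed in $k[[T_i]]$ via that substitution) by the $i$-th coordinate $\psi_{u,W}^{(i)}(T_i,t_\bullet)$ of the $u$-th BA-function, producing the vector formula claimed once the explicit expressions from Remark \ref{eq:r:BAOmega} and Definition \ref{eq:d:uBA} are substituted. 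The main subtlety is verifying the identification of $f^{\ast}$ with componentwise multiplication after the change of variable $z=T_i^{n_i}u_i(T_i)^{-1}$, which is exactly what the product decomposition of $\widehat\Oc_{\widehat X_V}$ in Proposition \ref{higgs:p:X_V} makes available.
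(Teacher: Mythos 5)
Your argument is correct and follows essentially the same route as the paper: the paper's own proof simply cites Theorem \ref{eq:t:BAgenW^m} and Remark \ref{eq:BAgenOmega^s} for the generating property, and Remark \ref{eq:r:BAOmega}, Definition \ref{eq:d:uBA} and the graph morphism for the explicit formula, which is exactly the chain of references you use (you merely spell out the bilinearity step and the componentwise action of $f^{\ast}$ under the decomposition $\Oc_{\widehat X_V}\cong\prod_i k[[T_i]]$ in more detail). No discrepancy in method; your version is, if anything, more explicit than the original about the substitution $z=T_i^{n_i}u_i(T_i)^{-1}$.
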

\begin{proof}
They are a generating system by \ref{eq:t:BAgenW^m} and \ref{eq:BAgenOmega^s}. Explicit formula in characteristic zero follows from \ref{eq:r:BAOmega}, \ref{eq:d:uBA} and the definition of the graph morphism.
\end{proof}

\subsection{Equations of $\Higgs_X^{\infty}$.}\qquad

Theorem \ref{higgs:t:carHiggs} says that a rational point $(W,p(T)) \in \U_X^{\infty}(k) \times \As(k)$ lies on the image of 
$$\Higgs_X^{\infty}(k) \hookrightarrow \U_X^{\infty}(k) \times \As(k)$$
if and only if $T(W) \subseteq W\cdot \Omega$, where $\Omega \in \Gr(k((z)))(k)$ is the point defined by the canonical line bundle $\omega_X$ and $T$ is the homotety multiplying by $T$ in $V_p$. Bearing in mind the descomposition $V_p\iso V_1 \times \cdots \times V_r$, $T$ translates into the homotety multiplying by $T_{\bullet}=(T_1, \dots ,T_r)$, therefore, if we think of $T$ as an operator in $V_p$ is easy to see that $T$ is self-adjoint w.r.t. the pairing $\Tg_2$ defined in section \ref{eq:ss:BAad}, that is:
$$\Tg_2(T\cdot v,v')=\Tg_2(v,T\cdot v') \qquad \forall v,v'\in V_p\iso V_1\times \cdots V_r\,.$$

%que como endomorfismo de $k((z))$-espacios vectoriales de $k((z))^n$ tiene por matriz:
%$$\left( \begin{array}{cccccc}
%0 & & & & 0 & -a_n\\
%1 & \cdot & & & \cdot & -a_{n-1} \\
%\cdot & 1 &\cdot  & & \cdot & \cdot \\
%\cdot & & \cdot & \cdot & \cdot & \cdot \\
%\cdot & & & \cdot & 0 & -a_2 \\
%0 & \cdot & & 0 & 1 & -a_1
%\end{array} \right)\,.$$
%Observemos que $T$ puede interpretarse como un elemento en $\Gamma_{V_p}$ y por lo tanto podemos reescribir la %relaci\'on $T(W) \subseteq W\cdot \Omega$ como:
%$$\Omega^{-1} \cdot W \subseteq T^{-1}\cdot W \,.$$

\begin{thm}\label{eq:t:eqHiggs_X}
 Let $(W,p(T)) \in \U_X^{\infty}(k) \times \As(k)$ be a rational point, $\Omega \in \Gr^{g-1}(k((z)))$ and assume that  $W\in \Gr^m(V_p)$ (with $m\neq \frac{1}{2}(r-n)$).

$(W,p(T))$ lies in the image of $\Higgs_X^{\infty}(k) \hookrightarrow \U_X^{\infty}(k) \times \As(k)$ if and only if:
$$\Tg_2 (\frac{v_{r-n-m}\cdot T_{\bullet}\cdot \psi_{u,W}^{\ast}(T_{\bullet},t_{\bullet})}{(1,\dots ,T_u,\dots ,1)}, \frac{v_m\cdot \psi_{\Omega^{-1}}(z,t)\cdot \psi_{v,W}(T_{\bullet},t_{\bullet})}{z^{3g}\cdot (1,\dots ,T_v,\dots 1)})=0$$
for all $1\leq u,v \leq r$.
\end{thm}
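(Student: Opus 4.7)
The plan is to translate the geometric criterion of Theorem \ref{higgs:t:carHiggs} into the claimed bilinear identity by substituting the explicit Baker--Akhiezer generating systems for $W^{\bot}$ and $\Omega^{-1}\cdot W$. By Theorem \ref{higgs:t:carHiggs}, $(W,p(T))$ lies in the image iff $T(W)\subseteq W\cdot \Omega$, where $T$ denotes multiplication by $T_{\bullet}$ on $V_p$. Since $\Tg_2$ is non-degenerate, this is equivalent to $\Tg_2\bigl(T(W),(W\cdot \Omega)^{\bot}\bigr)=0$, so the first task is to compute $(W\cdot \Omega)^{\bot}$ and the second is to realize both arguments in BA-coordinates.

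For the orthogonal, the key claim is $(W\cdot \Omega)^{\bot}=W^{\bot}\cdot \Omega^{-1}$. The inclusion $\supseteq$ is elementary: the canonical isomorphism $\omega_X\otimes \omega_X^{-1}\iso \Oc_X$ yields $\Omega\cdot \Omega^{-1}\subseteq A$ under the Krichever embedding, and $A\cdot W^{\bot}\subseteq W^{\bot}$ because the hypothesis $W\in \U_X^{\infty}$ forces $W^{\bot}$ to be $A$-stable. Equality then follows from Serre duality on $X$: if $W$ corresponds to a rank-$n$ bundle $E$, then $W\cdot \Omega$ corresponds to $E\otimes \omega_X$, and its orthogonal under the trace-residue pairing corresponds to $E^{\ast}$, which in turn corresponds to $W^{\bot}\cdot \Omega^{-1}$. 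Injectivity of the Krichever map (\cite{AA,Mu}) and the isomorphism $R\colon \Gr^{m}(V_p)\iso \Gr^{-m}(V_p)$ of \S\ref{eq:ss:BAad} close the argument.

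Next, since multiplication by any element of $k((z))\subset V_p$ (in particular $T_{\bullet}$ and elements of $\Omega^{-1}$) is self-adjoint for $\Tg_2$ (because $\Tr(fa\cdot b)=\Tr(a\cdot fb)$ whenever $f\in k((z))$), the condition rewrites as
$$\Tg_2\bigl(T(W),W^{\bot}\cdot \Omega^{-1}\bigr)=\Tg_2\bigl(T(W^{\bot}),\Omega^{-1}\cdot W\bigr)=0.$$
To finish, Theorem \ref{eq:t:adBAgenW^m} says that as $t_{\bullet}$ varies the quotients $v_{r-n-m}\psi^{\ast}_{u,W}/(1,\dots,T_u,\dots,1)$, $1\leq u\leq r$, span $W^{\bot}$, so after multiplying by $T_{\bullet}$ they span $T(W^{\bot})$. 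On the other hand, combining Theorem \ref{eq:t:BAgenW^m} for $W$, Remark \ref{eq:BAgenOmega^s} (which, since $\Omega^{-1}\in \Gr^{1-3g}(k((z)))$, is responsible for the $z^{3g}$ prefactor) and Proposition \ref{eq:p:genOmegaW}, the quotients $v_m\psi_{\Omega^{-1}}(z,t)\psi_{v,W}/\bigl(z^{3g}(1,\dots,T_v,\dots,1)\bigr)$, $1\leq v\leq r$, span $\Omega^{-1}\cdot W$ as $t,t_{\bullet}$ vary. Vanishing of $\Tg_2$ on these two spanning families is exactly the displayed identity, giving the equivalence.

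The main obstacle is the identification $(W\cdot \Omega)^{\bot}=W^{\bot}\cdot \Omega^{-1}$ of the second paragraph: the inclusion $\supseteq$ is a soft consequence of the stabilizer property of $A$, but the equality genuinely uses that $W$ comes from a vector bundle on $X$ (not an arbitrary point of $\Gr(V_p)$), since otherwise $W^{\bot}$ need not be $A$-stable and the Serre-duality matching breaks. Once this is granted, the remainder is bookkeeping: the factors $v_m$, $v_{r-n-m}$, $z^{3g}$, and $(1,\dots,T_u,\dots,1)$ are precisely the normalizations that strip the BA functions down to honest generators of the relevant subspaces of $V_p$, and self-adjointness of the $k((z))$-action packages the orthogonality as the symmetric bilinear residue in the statement.
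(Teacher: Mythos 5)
Your proposal is correct and agrees with the paper's proof at both ends: both reduce the criterion $T(W)\subseteq W\cdot \Omega$ of Theorem \ref{higgs:t:carHiggs} to the vanishing of $\Tg_2$ on $T\cdot W^{\bot}$ against $\Omega^{-1}\cdot W$, and both then plug in the BA generating systems of Theorems \ref{eq:t:BAgenW^m} and \ref{eq:t:adBAgenW^m} and Remark \ref{eq:BAgenOmega^s} (which is exactly where the normalizations $v_m$, $v_{r-n-m}$, $z^{3g}$ and $(1,\dots ,T_u,\dots ,1)$ come from). The middle step, however, is genuinely different. You dualize immediately, which forces you to identify $(W\cdot \Omega)^{\bot}=W^{\bot}\cdot \Omega^{-1}$; this is true, but it is the heaviest ingredient of your argument (Serre duality on the spectral curve plus Krichever injectivity --- or, more cheaply, your soft inclusion $\supseteq$ combined with the remark that both sides have the same index in $\Gr(V_p)$, so the inclusion is an equality). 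The paper sidesteps this entirely: it first uses the $A$-module structure of $W$ to rewrite $T\cdot W\subseteq W\cdot \Omega$ as $\Omega^{-1}\cdot W\subseteq T^{-1}\cdot W$ (using $\Omega\cdot \Omega^{-1}=A$ and $A\cdot W=W$), and only then takes orthogonals, needing nothing beyond the elementary identity $(T^{-1}\cdot W)^{\bot}=T\cdot W^{\bot}$ coming from self-adjointness of $T$. Your route is more symmetric and makes the geometric meaning of the orthogonal explicit, at the price of an extra duality computation that must be justified carefully (``corresponds to $E^{\ast}$'' is not quite the right dual bundle --- the relative dualizing sheaf of the cover enters); the paper's route is shorter and stays within the linear-algebraic setting. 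One small inaccuracy: for the inclusion $W^{\bot}\cdot \Omega^{-1}\subseteq (W\cdot \Omega)^{\bot}$ you only need $A$-stability of $W$ (group the product as $\Tg_2\bigl(w',(f'f)\cdot w\bigr)$ with $f'f\in A$), not $A$-stability of $W^{\bot}$.
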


\begin{proof}
By theorem \ref{higgs:t:carHiggs}, $(W,p(T)) \in \U_X^{\infty}(k) \times \As(k)$ lies on the image of  $\Higgs_X^{\infty}(k) \hookrightarrow \U_X^{\infty}(k) \times \As(k)$ if and only if $T\cdot W \subseteq W\cdot \Omega$. Now:
$$T\cdot W \subseteq W\cdot \Omega \iff \Omega^{-1} \cdot W \subseteq T^{-1}\cdot W \iff (T^{-1}\cdot W)^{\bot} \subseteq (\Omega^{-1} \cdot W)^{\bot}$$
where $\bot$ denotes the orthogonal w.r.t. $\Tg_2$. 

Because $T$ is self-adjoint, we have:
\begin{align*}
(T^{-1}\cdot W)^{\bot}:&=\{v\in V_p\,|\, \Tg_2(v,T^{-1}\cdot w)=0 \quad \forall w\in W\}=\\
&=\{v\in V_p\,|\, \Tg_2(T^{-1} \cdot v,w)=0 \quad \forall w\in W\}=\\
&=\{v\in V_p \,|\, T^{-1}\cdot v \in W^{\bot}\}=T\cdot W^{\bot}
\end{align*}
Since multiplying by $T$ corresponds to multiplying by $T_{\bullet}$, using the theorem \ref{eq:t:adBAgenW^m} we have that:
$$\psi_{1,W}^{\ast}(T_{\bullet},t_{\bullet}),\dots ,\psi_{r,W}^{\ast}(T_{\bullet},t_{\bullet})$$
is a generating system for $W^{\bot}$. Therefore
$$T_{\bullet}\cdot \psi_{1,W}^{\ast}(T_{\bullet},t_{\bullet}),\dots ,T_{\bullet}\cdot \psi_{r,W}^{\ast}(T_{\bullet},t_{\bullet})$$
is a generating system for $T\cdot W^{\bot}$ and (by theorem \ref{eq:t:adBAgenW^m}):
\begin{align*}
 &T_{\bullet}\cdot \psi_{u,W}^{\ast}(T_{\bullet},t_{\bullet})=\\
&=v_{r-n-m}^{-1}(1,\dots ,T_u, \dots ,1)\cdot \sum_{i>0}\big( T_1\cdot \psi_{u,W}^{\ast (i,1)}(T_1),\dots , T_r\cdot \psi_{u,W}^{\ast (i,r)}(T_r)\big)p_{ui,W}^{\ast}(t_{\bullet})
\end{align*}

where
$$\{\big( T_1\cdot \psi_{u,W}^{\ast (i,1)}(T_1),\dots , T_r\cdot \psi_{u,W}^{\ast (i,r)}(T_r)\big)\,|\,i>0,1\leq u \leq r\}$$
is a basis for $T\cdot W^{\bot}$ and $p_{ui,W}^{\ast}(t_{\bullet})$ are functions in $t_{\bullet}$.

Since $\Tg_2$ is non-degenerated and because of the expressions \ref{eq:BAgenOmega^s}, \ref{eq:t:BAgenW^m}, \ref{eq:t:adBAgenW^m} of the functions $\psi_{\Omega^{-1}}(z,t)$, $\psi_{v,W}(T_{\bullet},t_{\bullet})$ and $\psi_{u,W}^{\ast}(T_{\bullet},t_{\bullet})$,  we conclude that $T\cdot W^{\bot} \subseteq (\Omega^{-1} \cdot W)^{\bot}$ if and only if the following set of equations is satisfied:
$$\Tg_2 (\frac{v_{r-n-m}\cdot T_{\bullet}\cdot \psi_{u,W}^{\ast}(T_{\bullet},t_{\bullet})}{(1,\dots ,T_u,\dots ,1)}, \frac{v_m\cdot \psi_{\Omega^{-1}}(z,t)\cdot \psi_{v,W}(T_{\bullet},t_{\bullet})}{z^{3g}\cdot (1,\dots ,T_v,\dots 1)})=0$$
for all $1\leq u,v \leq r$.
\end{proof}

%\begin{corol}
%$(W,p(T))$ lies in the image of $\Higgs_X^{\infty}(k) \hookrightarrow \U_X^{\infty}(k) \times \As(k)$ if and only %if:
%$$\sum_{i=1}^r \Res_{z=0}\big(\sum_{j=1}^{n_i}(\xi_i^jT_i)^{2-\delta_{iu}-\delta_{iv}}\psi_{\Omega^{-1}}(z,t)\cdot %\psi_{u,W}^{\ast,(i)}(\xi_i^jT_i,t_{\bullet})\cdot %\psi_{v,W}^{(i)}(\xi_i^jT_i,t_{\bullet})\big)\frac{dz}{z^{3g+1}}=0$$
%for all $1\leq u,v \leq r$.
%\end{corol}

%\begin{proof}
% We only have to bear in mind that $v_m\cdot v_{r-n-m}=z^{-1}\cdot T_{\bullet}$ (see remark \ref{higgs:r:v_m}), the %definition of $\Tg_2$ (section \ref{eq:ss:BAad}) and that the trace morphism is described by:
%\begin{align*}
% \Tr \colon V_1 \times \cdots \times V_r & \to k((z)) \\
%\big(f_1(T_1),\dots ,f_r(T_r) \big) & \mapsto \sum_{j=1}^{n_1}f_1(\xi_1^jT_1)+ \cdots +\sum_{j=1}^{n_r}f_r(\xi_r^j %T_r)
%\end{align*}
%where $\xi_i$ are $n_i$-th roots of unity.
%\end{proof}

\subsection{Equations for irreducible polynomial.}\quad

In this section equations describing $\Higgs_X^{\infty}$ for the case in which the spectral cover $\pi \colon X_{\varphi} \to X$ is totally ramified at $x\in X$ (that is, $\pi^{-1}(x)=ny$) are computed explicitely in terms of the characteristic coefficients of the formal Higgs field.

In our formal setting, to assume this hypothesis implies that the characteristic polynomial of the formal Higgs field is irreducible. So, from now on, assume we are given an irreducible polynomial
$$p(T)=T^n-a_1T^{n-1}+\cdots +(-1)^na_n$$

The multiplycation by $T$ in $V_p=k((z))[T]/p(T)$ w.r.t. the basis $\{1,T,\dots ,T^{n-1}\}$ corresponds to the matrix:
$$T=\left( \begin{array}{cccccc}
0 & & & & 0 & (-1)^{n+1}a_n\\
1 & \cdot & & & \cdot & (-1)^na_{n-1} \\
\cdot & 1 &\cdot  & & \cdot & \cdot \\
\cdot & & \cdot & \cdot & \cdot & \cdot \\
\cdot & & & \cdot & 0 & -a_2 \\
0 & \cdot & & 0 & 1 & a_1
\end{array} \right)$$

By equation \ref{eq:e:descom} we are in the following case:
$$V_p=k((z))[T]/p(T)\iso k((z))[T]/T^n-zu(T)\iso k((T))$$
so that $u(T)$ can be computed explicitely in terms of $a_1,\dots ,a_n$. Note in addition that multiplying by $T^{-1}$ makes sense in $V_p$ (but not in $V_p^+$).

On one hand, if $W\in \Gr^m(V_p)$ then its BA-function is given by: 
\begin{equation}\label{eq:e:Psi_W}
\psi_W(T,t_n)=T^{1-m}\sum_{i>0}\psi_W^{(i)}(T)p_i(t_n)
\end{equation}
where $\{\psi_W^{(i)}(T)\}_i$ is a basis for  $W$ and $p_i(t_n)$ are functions in $t_n$ (see remark \ref{eq:BAgenOmega^s} and notice that we are writing $t_n$ for the coordinates of $\Gamma_{V_p}$). Observe that the orthogonal $W^{\bot}$ w.r.t. the metric $\Tg_2$ definied in section \ref{eq:ss:BAad} lies on $\Gr^{-m}(V_p)$, therefore:
\begin{equation}\label{eq:e:Psi_Wadj}
\psi_W^{\ast}(T,t_n)=T^{1+m}\sum_{i>0}\psi_W^{\ast,(i)}(T)p_i^{\ast}(t_n)
\end{equation}
where $\{\psi_W^{\ast,(i)}(T)\}_i$ is a basis for  $W^{\bot}$ and $p_i^{\ast}(t_n)$ are functions in $t_n$. 

On the other hand, as in section \ref{eq:ss:prodBA} we can define the product of the BA-function $\Psi_{\Omega^{-1}}(z,t)$ of $\Omega^{-1}\in \Gr(k((z)))$ by the BA-function $\Psi_W(T,t_n)$ of $W\in \Gr(V_p)$ bearing in mind that the $k((z))$-algebra structure of $V_p\iso k((T))$ is given by:
\begin{align*}
 k((z))&\to k((T))\\
z &\mapsto T^nu(T)^{-1}
\end{align*}
and the graph is the morphism induced by:
\begin{align*}
 k[[z]] \otimes_k k[[T]] & \to k[[T]]\\
z\otimes_k T & \mapsto zT
\end{align*}

In order to compute explicit equations for $\Higgs_X^{\infty}$, BA-functions will be thought as mere functions on $V_p$, that is, as polynomials on $T$ with coefficients in $k((z))$. In this fashion, we set:
\begin{equation}\label{eq:e:Psi_Wpol}
 \Psi_W=\sum_{i=0}^{n-1}\Psi_W(i)T^i\,, \qquad \Psi_W(i)\in k((z))
\end{equation}
\begin{equation}\label{eq:e:Psi_Wadjpol}
 \Psi_W^{\ast}=\sum_{i=0}^{n-1}\Psi_W^{\ast}(i)T^i\,, \qquad \Psi_W^{\ast}(i)\in k((z))
\end{equation}

Let $(E,\varphi,\phi)$ be a rational point of $\Higgs_X^{\infty}$ and 
$$p_{\widehat \varphi}(T)=T^n-a_1T^{n-1}+\cdots +(-1)^na_n\in \As(k)$$
its formal characteristic polynomial. Newton-Girard identities (see \cite{McD}) express $\Tr(T^k)$ in terms of $a_i=\Tr(\Lambda^i T)$ as follows:
\begin{equation}\label{eq:e:newton}
\Tr(T^{k})=\left | \begin{array}{cccccc}
a_1 & 1 & 0 & \cdot & \cdot & \cdot \\
2a_2 & a_1 & 1 & 0 & \cdot & \cdot  \\
\cdot & \cdot &\cdot  & \cdot & \cdot & \cdot \\
\cdot & \cdot &  & \cdot &  &  \\
\cdot & \cdot &  &  & \cdot &  \\
ka_k & a_{k-1} & \cdot & \cdot & \cdot & e_1
\end{array} \right |\in k((z)) \,, \qquad \forall k\geq 0
\end{equation}
where $a_k=0$ for $k>n$.

Consider now the following stratification of $\Higgs_X^{\infty}$ into connected components:
$$\Higgs_X^{\infty}=\coprod_{\underline n}\Higgs_X^{\infty}(\underline n)$$
where $\Higgs_X^{\infty}(\underline n)$ denotes the connected component of $\Higgs_X^{\infty}$ for which the spectral cover $\pi \colon X_{\varphi} \to X$ is ramified with partition numbers $\underline n=(n_1,\dots ,n_r)$ (where $n_1+\cdots +n_r=n$), that is to say, $\pi^{-1}(x)=n_1y_1+\cdots +n_ry_r$. Note that this stratification is induced by the decomposition of the characteristic polynomial into irreducible factors.

We will write $\Higgs_X^{\infty}(n)$ for the connected componet on which the spectral cover is totally ramified ($\pi^{-1}(x)=ny$).

\begin{thm}\label{eq:t:eq_tot_rami}
 Let $(W,p(T))$ be a rational point of $\U_X^{\infty}\times \As$. Then, $(W,p(T))$ lies on the image of $\Higgs_X^{\infty}(n) \hookrightarrow \U_X^{\infty} \times \As$ if and only if $p(T)$ is irreducible and:
$$\sum_{k=0}^{2n-2}\sum_{i+j=k}\Res_{z=0}\big(\Psi_W^{\ast}(i)\Psi_{\Omega^{-1}}\Psi_W(j)\Tr(T^{k-1}) \big)\frac{dz}{z^{3g}}=0$$
\end{thm}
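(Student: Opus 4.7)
The strategy is to specialize Theorem \ref{eq:t:eqHiggs_X} to the totally ramified case and then unfold all BA-functions as polynomials in $T$ in order to convert the abstract bilinear identity into an explicit polynomial equation in the characteristic coefficients $a_1,\dots,a_n$.

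\smallskip
\textbf{Step 1 (Reduction to $r=1$).} I would first identify the condition $\pi^{-1}(x)=ny$ with the irreducibility of $p(T)$. By Proposition \ref{higgs:p:X_V}, the number of geometric points of $\pi^{-1}(x)$ equals the number of factors $r$ in the decomposition $V_p\cong V_1\times\cdots\times V_r$, so total ramification amounts to $r=1$, equivalently $V_p\cong k((T))$ is a field, equivalently $p(T)$ is irreducible over $k((z))$. This establishes the ``only if'' part of the reducibility statement and restricts the indices $u,v$ in Theorem \ref{eq:t:eqHiggs_X} to $u=v=1$, leaving a single equation
\[
\Tg_2\!\Bigl(v_{1-n-m}\,\psi_{W}^{\ast}(T,t),\ \tfrac{v_m\,\psi_{\Omega^{-1}}(z,t)\,\psi_{W}(T,t)}{z^{3g}\,T}\Bigr)=0,
\]
where the factors $(1,\dots,T_u,\dots,1)$ collapse to $T$ and cancel with the $T$ in the first denominator.

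\smallskip
\textbf{Step 2 (Polynomial expansion).} Using the residue description of $\Tg_2$ and the algebra structure of $V_p$, I would rewrite the condition as a single residue
\[
\Res_{z=0}\,\Tr\!\Bigl(v_{1-n-m}v_m\,\tfrac{\psi_{W}^{\ast}\,\psi_{\Omega^{-1}}\,\psi_{W}}{z^{3g}\,T}\Bigr)\,dz=0.
\]
Then I would expand the BA-functions as polynomials in $T$ of degree at most $n-1$ using (\ref{eq:e:Psi_Wpol})--(\ref{eq:e:Psi_Wadjpol}), so that
\[
\psi_W^{\ast}\,\psi_W=\sum_{i,j=0}^{n-1}\Psi_W^{\ast}(i)\,\Psi_W(j)\,T^{i+j},
\]
while $\psi_{\Omega^{-1}}$ remains a scalar in $k((z))$ carrying the factor $z^{3g}$ predicted by Remark \ref{eq:BAgenOmega^s} (since $\omega_X^{-1}$ sits in $\Gr^{1-3g}(k((z)))$).

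\smallskip
\textbf{Step 3 (Trace and Newton--Girard).} By $k((z))$-linearity of $\Tr$, each summand contributes $\Psi_W^{\ast}(i)\Psi_{\Omega^{-1}}\Psi_W(j)\Tr(T^{i+j-1})$, where the shift by $-1$ is exactly the $T^{-1}$ factor in the second argument. Setting $k=i+j$ produces the index range $0\le k\le 2n-2$ that appears in the statement. The point of working in the totally ramified case is that every $\Tr(T^{k-1})$ is a \emph{universal} polynomial in $a_1,\dots,a_n$ via the Newton--Girard identities (\ref{eq:e:newton}); note that $\Tr(T^{-1})$ makes sense because $T$ is invertible in $V_p$ (as $a_n\ne 0$ by irreducibility), and it is also computed recursively from the same relations.

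\smallskip
\textbf{Step 4 (Normalization and $z$-power).} Finally I would show that the factor $v_{1-n-m}\cdot v_m$ combines with $z^{3g}$ in the denominator to yield precisely $dz/z^{3g}$ in the statement. From Remark \ref{higgs:r:v_m} these normalizations are concentrated in the $T$-variable (for $r=1$), so after taking the trace they become scalar factors which do not affect which coefficient of $z$ the residue picks out; what remains is the $z^{-3g}$ coming from $\psi_{\Omega^{-1}}/z^{3g}$. Collecting everything yields the claimed identity, and conversely this identity is seen to be equivalent to the inclusion $T\cdot W\subseteq W\cdot\Omega$ via the non-degeneracy of $\Tg_2$ and Theorems \ref{eq:t:BAgenW^m} and \ref{eq:t:adBAgenW^m}, since $W$ (resp.\ $W^{\perp}$) is characterized by being spanned by specializations of $\psi_W$ (resp.\ $\psi_W^{\ast}$).

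\smallskip
\textbf{Main obstacle.} The routine parts are the polynomial expansion and the application of Newton--Girard. The delicate step is Step 4: keeping track of the normalization factors $v_m$ and $v_{1-n-m}$ uniformly in the discrete index $m$ of $W$ (including both cases $m\lessgtr(1-n)/2$, with the excluded value $m=(1-n)/2$ being exactly the locus where $v_m$ is not defined), and verifying that their effect on the residue reduces to the single factor $1/z^{3g}$ predicted by the statement. The use of $\psi_W^{\ast}$ rather than $\psi_W$ on the first slot of $\Tg_2$ is precisely what makes $T$ self-adjoint and lets one convert $T\cdot W\subseteq W\cdot\Omega$ into $T\cdot W^{\perp}\subseteq (\Omega^{-1}\cdot W)^{\perp}$, which is the form amenable to this expansion.
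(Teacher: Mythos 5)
Your proposal is correct and follows essentially the same route as the paper's own proof: specialize Theorem \ref{eq:t:eqHiggs_X} to the totally ramified ($r=1$) case, expand the Baker--Akhiezer functions as polynomials in $T$ via (\ref{eq:e:Psi_Wpol})--(\ref{eq:e:Psi_Wadjpol}), and use the residue description of $\Tg_2$ together with the $k((z))$-linearity of $\Tr$ and the Newton--Girard identities (\ref{eq:e:newton}). Your Step 4 even makes explicit the bookkeeping of the normalizations $v_m$, $v_{1-n-m}$ and the index shift by $T^{-1}$, which the paper passes over more quickly.
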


\begin{proof}
Using the expressions \ref{eq:e:Psi_W}, \ref{eq:e:Psi_Wadj}, remark \ref{eq:BAgenOmega^s} and theorem \ref{eq:t:eqHiggs_X} we get that $(W,p(T))$ lies in the image of $\Higgs_X^{\infty,n}(k) \hookrightarrow \U_X^{\infty}(k) \times \As(k)$ if and only if:
\begin{equation}\label{eq:e:T_2_tot_ram}
 \Tg_2 \big(T\cdot T^{-1+m} \cdot \psi_W^{\ast}(T,t_n), T^{-1-m}\cdot \psi_{\Omega^{-1}}(z,t)\cdot \psi_W(T,t_n)\cdot z^{-3g}\big)=0
\end{equation}
In order to make this formula explicit, observe that the $k$-algebra structure in $V_p$ is nothing but the standard polynomial multiplication mod $p(T)$. So using the polynomial expressions \ref{eq:e:Psi_Wpol}, \ref{eq:e:Psi_Wadjpol} and the  definition of $\Tg_2$ (see section \ref{eq:ss:BAad}), the equation \ref{eq:e:T_2_tot_ram} becomes:
\begin{equation}
\Res_{z=0}\big(\Tr(\sum_{k=0}^{2n-2}\sum_{i+j=k}\Psi_W^{\ast}(i)\Psi_{\Omega^{-1}}\Psi_W(j)T^{k-1}z^{-3g})\big)dz=0
\end{equation}
Bearing in mind that the trace map is $k((z))$-linear and the linearity of $\Res$ we get the result.
\end{proof}

\begin{remark}
 These techniques can also be used to give a different way of computing the equations for the connected component $\Higgs_X^{\infty}(1,\dots ,1)$ (on which the spectral cover is not ramified) treated on \cite{LM1}.
\end{remark}

%Let $(E,\varphi, \phi)$ be a rational point of $\Higgs_X^{\infty}$ and let 
%$$p_{\widehat \varphi}(T)=T^n-a_1T^{n-1}+\cdots +(-1)^na_n \in \As(k)$$
%be the characteristic polynomial of the formal Higgs field $\widehat \varphi$.

\bibliographystyle{siam}
\bibliography{biblio}
\end{document}